\documentclass{amsart}

\usepackage{graphicx}
\usepackage{amsmath}
\usepackage{amssymb}
\usepackage{mathrsfs}

\newtheorem{thm}{Theorem}[section]
\newtheorem{rem}{Remark}
\newtheorem{prop}[thm]{Proposition}
\newtheorem{lemma}[thm]{Lemma}
\newtheorem{corol}[thm]{Corollary}

\newtheorem*{thm*}{Theorem}

\newcommand{\R}{\Re}

\newcommand{\C}{{\mathbb C}}
\newcommand{\s}{\sigma_0}
\newcommand{\hf}{\tfrac{1}{2}}
\newcommand{\h}{\frac{1}{2}}
\newcommand{\ct}{{\tilde{C}}}
\newcommand{\pf}{{\mathfrak{p}}}
\newcommand{\af}{{\mathfrak{a}}}

\begin{document}

\title{Explicit Upper Bounds for L-functions on the critical line}

\author{Vorrapan Chandee}

\address{Stanford University, 450 Serra Mall, Building 380, Stanford, CA 94305}
\email{vchandee@math.stanford.edu}
\subjclass[2000]{Primary 11M41; Secondary 11E25.}
\keywords{L-functions, critical line, ternary quadratic form.}

\begin{abstract}
We find an explicit upper bound for general $L$-functions on the critical line, assuming the Generalized Riemann Hypothesis, and give as illustrative examples its application to some families of $L$-functions and Dedekind zeta functions. Further, this upper bound is used to obtain lower bounds beyond which all eligible integers are represented by Ramanujan's ternary form and Kaplansky's ternary forms. This improves on previous work of Ono and Soundararajan \cite{OS} on Ramanujan's form and Reinke \cite{Re} on Kaplansky's form with a substantially easier proof. 
\end{abstract}
\maketitle

\section{Introduction}
Finding upper bounds for $L$-functions on the critical line is an interesting problem in analytic number theory. One classical bound is the convexity bound, which follows from the Phragmen-Lindel\"{o}f principle and the approximate functional equation of the L-function. For any $\epsilon > 0,$ the convexity bound is 
$$ L(f,s) \ll q(f,s)^{1/4 + \epsilon}, $$ 
where $\R s =\hf$ and $q(f,s)$ is the analytic conductor of the $L$-function, whose definition we recall later. However, for many applications this convexity bound is not sufficient, and one needs to improve it for relevant $L$-functions by reducing the exponent $1/4$ by a positive number.  This is known as the sub-convexity problem.  Although such subconvexity bounds have been derived in many cases, we are still far from proving the conjectured order even for the Riemann zeta function.  The Lindel\"{o}f Hypothesis states that for any $\epsilon > 0,$
$$ L(f,s) \ll q(f,s)^{\epsilon},$$
where $\R s = \hf,$ and the implied constant depends on $\epsilon.$ This is a well known consequence of the Generalized Riemann Hypothesis (GRH) (see e.g. Corollary 5.20 p. 116 in \cite{IK}). Indeed, in the process of deducing the Lindel\"{o}f Hypothesis from GRH, we obtain that for some $A > 0$
\begin{equation} \label{eqn:lindelof}
L(f,\hf) \ll \exp\left( A \frac{\log q(f,\hf) }{\log \log q(f,\hf)} \right).
\end{equation}
In this paper, instead of giving asymptotic upper bounds for $L$-functions, we will compute an explicit upper bound for $L$-functions on the critical line, assuming GRH. This type of upper bound is useful for certain applications. For example, Ono and Soundararajan \cite{OS} used explicit upper bounds (on GRH) for $L$-functions at the critical point to show that all odd numbers larger than $2 \cdot 10^{10}$ are represented by Ramanujan's form $x^2 + y^2 + 10z^2$.  Using our work here we can give a substantially easier proof of their results. Moreover our bounds show that all odd numbers larger than $3\cdot 10^7$ are represented, so that much less computation is required. We anticipate that these results would be useful in studying representation questions for general ternary quadratic froms. We will give more details on this problem, together with other examples in \S 4.
\\
\\
To calculate the explicit upper bound, we will follow Soundararajan's technique from \cite{Moment}. The main idea of the technique is to write $\log|L(f, \hf)|$ as a short sum over prime powers and a sum over non-trivial zeros of $L(f, s)$. Furthermore in finding our upper bound for $\log|L(f, \hf)|,$ the contribution of nontrivial zeros can be ignored. The short sum over prime powers can be explicitly bounded both numerically and in term of analytic conductor, which will be defined below. We will give details of the proof in \S 2.  Before we state the main theorem and corollaries, here are basic notations that we will use throughout the paper.
\\
\\
From \cite{IK} p. 94, $L(f,s)$ is an $L$-function if it satisfies the following properties.
\\

I. It is a Dirichlet series with Euler product of degree $d \geq 1,$
\begin{equation}\label{Euler}
L(f,s) = \sum_{n \geq 1} \lambda_f(n)n^{-s} = \prod_p (1-\alpha_1(p)p^{-s})^{-1}...(1-\alpha_d(p)p^{-s})^{-1}, 
\end{equation} 
and 
\begin{equation}\label{Leqn}
-\frac{L'}{L}(f,s) = \sum_{\substack{n=p^l \\ l \geq 1}} \frac{(\alpha_1^l(p) +...+ \alpha_d^l(p))\log p}{n^{s}} = \sum_{n} \frac{a(n)}{n^s}
\end{equation}
with $\lambda_f(1) = 1, |\alpha_p| < p$ for all $p$. The series and Euler products are absolutely convergent for $\R(s) > 1.$
\\

II. Let
\begin{equation}\label{gammaEqn}
\gamma(f,s) = \pi^{-ds/2} \prod_{j=1}^d \Gamma\left(\frac{s+k_j}{2}\right)
\end{equation} 
be a gamma factor. Since we will assume GRH, $\R k_j \geq 0.$  This condition tells us that $\gamma(f,s)$ has no zero in $\C$\: and no pole for $\R(s) \geq 0.$ Let an integer $q(f) \geq 1$ be a conductor of $L(f,s)$ such that $\alpha_i(p) \neq 0$ for $p$ not dividing $q(f).$ Let 
$$\Lambda(f,s) = q(f)^{s/2}\gamma(f,s)L(f,s)$$
$L(f,s)$ must satisfy the functional equation
\begin{equation}\label{fncEqn}
\Lambda(f,s) = \epsilon(f)\overline{\Lambda}(f,1-s),
\end{equation}
where $|\epsilon(f)| = 1$, and $\overline{\Lambda}(f,s) = \overline{\Lambda(f,\overline{s})}$.
We define the {\it analytic conductor} of $L(s,f)$ to be 
\begin{equation}\label{def:cond}
C = \frac{q(f)}{\pi^d} \prod_{j=1}^d \left|\frac{1}{4} + \frac{k_j}{2} \right|.
\end{equation}
In this paper, we will assume that $\Lambda(f,s)$ has no zero or pole at $\R s = 1.$ Note that by the functional equation, this implies the same holds at at $\R s = 0.$  
\\
\\
Our main theorem will be an upper bound for $\log L(f, \hf)$ in terms of the sum over prime powers and the analytic conductor (Theorem \ref{prop:upper} in \S 2). As a corollary, we derive the Lindel\"{o}f hypothesis from the main theorem. In fact, we can find an explicit constant $A$ in (\ref{eqn:lindelof}) as below. 
\begin{corol}\label{cor:prop} Let $L(f,s)$ be an $L$-function satisfying the conditions above and $C$ be defined as in (\ref{def:cond}). Furthermore assume that $L(f,s)$ satisfies Ramanujan's conjecture. Then for $\log \log C \geq 10,$
$$ \log |L(f, \hf)| \leq \frac{23d}{25}\frac{\log C}{\log^2 \log C}+ \frac{3}{8} \frac{\log C}{\log \log C},$$
\end{corol}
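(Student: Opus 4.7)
The plan is to invoke Theorem~\ref{prop:upper}, which bounds $\log|L(f,\hf)|$ by the real part of a weighted short Dirichlet sum over prime powers up to a parameter $x \geq 2$ plus a ``conductor term'' of shape $\frac{(1+\lambda)\log C}{2\log x}$ involving a second free parameter $\lambda > 0$, and then to estimate the prime sum using Ramanujan's conjecture and optimize in $(x,\lambda)$.

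Writing the Dirichlet sum schematically as $\R \sum_{n\leq x} \frac{a(n)}{n^{\hf+\lambda/\log x}\log n} \cdot \frac{\log(x/n)}{\log x}$, Ramanujan's conjecture yields $|\alpha_j(p)| \leq 1$ for all $j,p$, hence $|a(p^l)/\log p^l| \leq d/l$. The terms with $l\geq 2$ form an absolutely convergent series contributing $O(d)$ uniformly in $x$, so the main work is to estimate
$$ d \sum_{p \leq x} \frac{1}{p^{\hf+\lambda/\log x}}\cdot \frac{\log(x/p)}{\log x}. $$
I would apply partial summation together with an explicit form of the prime number theorem (such as Rosser--Schoenfeld) and compare to the integral $\int_0^1 \frac{1-u}{u}\, x^{u(\hf - \lambda/\log x)}\,du$ obtained by the substitution $u = \log t / \log x$. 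Since this integrand concentrates near $u=1$, a Laplace-type expansion there produces a leading bound of order $\frac{d\, e^{-\lambda}\, x^{\hf}}{\log^2 x}$, with explicit secondary corrections that must be retained.

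Combining the two contributions gives a bound of the form
$$ \log|L(f,\hf)| \leq c\cdot \frac{d\, e^{-\lambda}\, x^{\hf}}{\log^2 x} + \frac{(1+\lambda)\log C}{2\log x} + O(d),$$
and the natural balance is achieved by choosing $x$ so that $\log x \asymp \log\log C$ (essentially $x = (\log C)^2$) and $\lambda$ of order $1$. With $\log x = 2\log\log C$, the conductor term becomes $\frac{1+\lambda}{4}\cdot \frac{\log C}{\log\log C}$; the choice $\lambda = \hf$ produces exactly $\frac{3}{8}\cdot \frac{\log C}{\log\log C}$, matching the second term of the corollary. The hypothesis $\log\log C \geq 10$ then serves to make the lower-order contributions — the PNT error, the $O(d)$ from higher prime powers, and the subleading terms in the Laplace expansion of the integral — small enough to fit inside the explicit constant $\tfrac{23}{25}d$ governing the first term.

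The heart of the argument — applying Theorem~\ref{prop:upper} and optimizing the parameters $x$ and $\lambda$ — is fairly mechanical and follows Soundararajan's template. The real challenge is quantitative: the leading asymptotic of the integral alone does not directly yield the coefficient $\tfrac{23}{25}$, so one must carefully track all explicit constants in the prime-number estimates, handle the secondary terms in the expansion near $u=1$, and verify that the threshold $\log\log C \geq 10$ is sufficient to absorb everything into the stated coefficients $\tfrac{23}{25}d$ and $\tfrac{3}{8}$. This explicit bookkeeping, rather than any conceptual hurdle, is the main obstacle.
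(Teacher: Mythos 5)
Your proposal is correct and follows essentially the same route as the paper: invoke Theorem~\ref{prop:upper} with $\lambda = 1/2$, use Ramanujan's conjecture to control $a(n)$, estimate the resulting short prime-power sum by partial summation against an explicit (GRH-sharpened Rosser--Schoenfeld) bound on $\psi(t) = \sum_{n\leq t}\Lambda(n)$, and finally set $x = (\log C)^2$ so that the conductor term becomes exactly $\tfrac{3}{8}\tfrac{\log C}{\log\log C}$. One small cosmetic difference is that the paper does not split off primes from higher prime powers; instead it uses $|a(n)|\leq d\Lambda(n)$ and estimates the whole sum $\sum_{n\leq x}\frac{\Lambda(n)}{n^{1/2+\lambda/\log x}\log n}\frac{\log(x/n)}{\log x}$ at once via $\psi(t)\leq 1.006t$ (valid on GRH for $t\geq 2$), arriving at the explicit bound $3.675\,x^{1/2}/\log^2 x$; when $x=(\log C)^2$ this gives $\tfrac{3.675}{4}d\tfrac{\log C}{\log^2\log C} = 0.91875\,d\tfrac{\log C}{\log^2\log C}$, and the leftover $0.00125\,d\tfrac{\log C}{\log^2\log C}$ below $\tfrac{23}{25}d$ absorbs the small secondary errors from Theorem~\ref{prop:upper} once $\log\log C\geq 10$ (equivalently $\log x\geq 20$). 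You correctly identify that the only substantive work is this explicit numerical bookkeeping.
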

\begin{rem}
Without Ramanujan's conjecture, we instead obtain
$$\log |L(f, \hf)| \leq  \frac{9}{8} \frac{\log C}{\log \log C} + O\big(\frac{d \log C}{\log^2 \log C}\big).$$
\end{rem}
\begin{rem}
It may be possible to improve the explicit constant $3/8$ appearing in Corollary \ref{cor:prop} by using a different kernel in the proof of Lemma \ref{upsigma}.
\end{rem}
Applying Corollary \ref{cor:prop} to families of $L$-functions which satisfy Ramanujan's conjecture, we then easily obtain the explicit upper bound for these $L$-functions in term of their analytic conductor. We will illustrate our results using three examples of families of $L$-functions: Dirichlet $L$-functions, holomorphic cusp forms and the Riemann zeta function with varying height on the critical line. 
\begin{corol}\label{cor:dirichlet}
Let $\chi$ be a primitive even Dirichlet character modulo $q$. Let $\ct = q\big(|t| + \hf \big).$ If $\log \log \ct \geq 10,$ then for any $t,$ 
$$|L(\hf + it, \chi)| \leq \exp\left(\frac{23}{25}\frac{\log \ct}{\log^2 \log \ct}+ \frac{3}{8} \frac{\log \ct}{\log \log \ct}\right). $$
\end{corol}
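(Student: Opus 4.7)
The plan is to deduce Corollary \ref{cor:dirichlet} by applying Corollary \ref{cor:prop} to the Dirichlet $L$-function $L(s,\chi)$ shifted by $it$. Concretely, I would set $f = f_{\chi,t}$ where $L(f,s) := L(s+it,\chi)$. This is a degree $d = 1$ $L$-function whose Dirichlet coefficients are $\chi(n)n^{-it}$, so $|\lambda_f(n)| \leq 1$ and Ramanujan's conjecture holds trivially. Since $\chi$ is a primitive even character modulo $q$, the gamma factor of $L(s,\chi)$ is $\pi^{-s/2}\Gamma(s/2)$, so the gamma factor of $L(f,s)$ has shape $\pi^{-s/2}\Gamma((s+it)/2)$ up to a constant phase. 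Thus in the notation of \eqref{gammaEqn} we have $d=1$ and $k_1 = it$ (so $\R k_1 = 0$ as required), and the conductor is $q(f)=q$. Under GRH, $\Lambda(f,s)$ has no zero or pole on $\R s = 1$, so all hypotheses of Corollary \ref{cor:prop} are met.

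Next, I would compute the analytic conductor explicitly from \eqref{def:cond}:
\begin{equation*}
C = \frac{q}{\pi}\left|\frac{1}{4}+\frac{it}{2}\right| = \frac{q}{4\pi}\sqrt{1+4t^2}.
\end{equation*}
An elementary estimate gives $\sqrt{1+4t^2} \leq 1 + 2|t| = 2(|t|+\tfrac{1}{2})$, hence
\begin{equation*}
C \;\leq\; \frac{q(|t|+\tfrac{1}{2})}{2\pi} \;=\; \frac{\ct}{2\pi} \;\leq\; \ct.
\end{equation*}

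To finish, I would invoke Corollary \ref{cor:prop} with $d=1$ to obtain
\begin{equation*}
\log|L(\tfrac12+it,\chi)| \;\leq\; \frac{23}{25}\frac{\log C}{\log^2\log C} + \frac{3}{8}\frac{\log C}{\log\log C},
\end{equation*}
and then replace $C$ by $\ct$ using monotonicity. Since $C \leq \ct$, we have $\log C \leq \log\ct$, and $\log\log\ct \geq 10$ (the hypothesis of the corollary) together with $\log C = \log\ct - \log(2\pi)$ ensures $\log\log C \geq 10$ as well, so Corollary \ref{cor:prop} applies. Because each of the functions $x \mapsto x/(\log x)^2$ and $x \mapsto x/\log x$ is increasing for $x$ sufficiently large (certainly for $x \geq e^{e^{10}}$), upgrading $\log C$ to $\log\ct$ only weakens the inequality, yielding the stated bound. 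The only mildly delicate point is keeping track of the gamma factor normalization for the shifted $L$-function to check that it really fits the framework of Section 1; but once one accepts that $L(s+it,\chi)$ is an admissible $L$-function in the sense of axioms I and II with analytic conductor $C$ as above, the corollary is immediate from monotonicity. No new analytic input is required.
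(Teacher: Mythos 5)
Your approach is essentially the same as the paper's: identify $L(s+it,\chi)$ as a degree-$1$, Ramanujan-bounded $L$-function with gamma shift $k_1 = it$, compute the analytic conductor $C = \frac{q}{4\pi}\sqrt{1+4t^2}$, observe $C \leq \ct$, and appeal to Corollary~\ref{cor:prop}. The computation of $C$ and the bound $C\le\ct$ agree with the paper.

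There is one small slip worth flagging. You first invoke Corollary~\ref{cor:prop} with the genuine analytic conductor $C$ and afterward upgrade $\log C$ to $\log\ct$ by monotonicity. This requires the hypothesis $\log\log C \ge 10$, and you assert that $\log\log\ct\ge 10$ together with $\log C = \log\ct - \log(2\pi)$ guarantees it. In fact $\log C \le \log\ct - \log(2\pi)$ (with equality only at $t=0$), so if $\log\ct$ is only barely above $e^{10}$ then $\log\log C$ falls marginally short of $10$. The paper sidesteps this by reading Theorem~\ref{prop:upper} and the proof of Corollary~\ref{cor:prop} as producing a bound that is monotone increasing in whatever quantity $\ct\ge C$ one substitutes for $C$, choosing $x=\log^2\ct$ directly; then the threshold one needs is precisely $\log\log\ct\ge 10$, which is the stated hypothesis. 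Your argument is repaired by the same observation, so this is a matter of ordering the steps rather than a substantive error.
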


\begin{corol} \label{cor:cuspform} Let $f(s) = \sum_{n \geq 1} \lambda(n) q^n$ be a holomorphic cusp form, of weight $k \geq 1$ and level $q$, and $L(f,s) = \sum_{n \geq 1} \tfrac{\lambda(n)}{n^s}$ be an $L$-function associated to $f.$ Define $\ct = qk^2.$ If $\log \log \ct \geq 10$, then 
$$ |L(f,\hf)| \leq \exp\left(\frac{46}{25}\frac{\log \ct}{\log^2 \log \ct}+ \frac{3}{8} \frac{\log \ct}{\log \log \ct}\right). $$
\end{corol}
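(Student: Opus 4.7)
The plan is to reduce Corollary \ref{cor:cuspform} to a direct application of Corollary \ref{cor:prop}, the substantive content being a computation of the analytic conductor of the $L$-function attached to a holomorphic cusp form of weight $k$ and level $q$, and its comparison with $\ct = qk^2$.

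First, I would identify the structural parameters of $L(f,s)$. In the analytic normalization adopted in \S 1, this is a degree $d=2$ Euler product whose gamma factor has the shape
$$\gamma(f,s) = \pi^{-s}\,\Gamma\!\left(\frac{s+(k-1)/2}{2}\right)\Gamma\!\left(\frac{s+(k+1)/2}{2}\right),$$
so in the notation of \eqref{gammaEqn} we have $k_1 = (k-1)/2$ and $k_2 = (k+1)/2$. Deligne's theorem on the Ramanujan--Petersson conjecture guarantees $|\alpha_i(p)| \leq 1$ for $p\nmid q$, so the Ramanujan hypothesis of Corollary \ref{cor:prop} is in force.

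Next, I would compute the analytic conductor via \eqref{def:cond}:
$$C = \frac{q}{\pi^2}\left(\frac{1}{4}+\frac{k-1}{4}\right)\left(\frac{1}{4}+\frac{k+1}{4}\right) = \frac{q\,k(k+2)}{16\pi^2},$$
and then observe that $k(k+2) \leq (k+1)^2 \leq 4k^2$ for $k\geq 1$, which yields the convenient (if crude) bound $C \leq \ct/(4\pi^2) < \ct$; in particular $\log C \leq \log \ct$.

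Finally, I would feed this into Corollary \ref{cor:prop} with $d=2$, which outputs
$$\log|L(f,\hf)| \leq \frac{46}{25}\frac{\log C}{\log^2 \log C} + \frac{3}{8}\frac{\log C}{\log \log C}.$$
Since the map $L \mapsto \tfrac{46}{25}L/(\log L)^2 + \tfrac{3}{8}L/\log L$ is monotonically increasing once $\log L$ exceeds a small absolute constant---a condition amply guaranteed by $\log\log \ct \geq 10$---I can replace $\log C$ by the larger quantity $\log \ct$ throughout and exponentiate to obtain the claimed inequality. No genuine obstacle arises; the only points demanding any attention are (i) matching the paper's analytic normalization against the classical normalization so that Ramanujan's conjecture genuinely reads $|\alpha_i(p)|\leq 1$, and (ii) the routine monotonicity check that allows $C$ to be replaced by $\ct$ in the bound of Corollary \ref{cor:prop}.
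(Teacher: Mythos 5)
Your proposal is correct and follows the same route as the paper: identify $d=2$ and the gamma factor, compute the analytic conductor $C$ from (\ref{def:cond}), verify $\log C \le \log\ct$, and feed this into Corollary \ref{cor:prop}. Incidentally, your value $C = \frac{qk(k+2)}{16\pi^2}$ is the one actually consistent with the gamma factor as written (i.e.\ with $k_j = (k\mp1)/2$); the paper records $C = \frac{q}{\pi^2}\bigl(\frac{k}{2}-\frac{1}{4}\bigr)\bigl(\frac{k}{2}+\frac{3}{4}\bigr)$, which corresponds to taking $k_j = k\mp1$ and appears to be a small slip, though this is harmless since either value satisfies $\log C \le \log\ct$ and the conclusion is unaffected.
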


\begin{corol} \label{cor:zetafunction} Assume $T \leq t \leq 2T.$ Let $\ct = T$. Also assume that $\log \log \ct \geq 10$. Then
 $$|\zeta(\hf + it)| \leq \exp\left(\frac{23}{25}\frac{\log \ct}{\log^2 \log \ct}+ \frac{3}{8} \frac{\log \ct}{\log \log \ct}\right). $$
\end{corol}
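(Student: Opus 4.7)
The plan is to recognize $\zeta(s+it)$ as an $L$-function in the variable $s$ and apply Corollary \ref{cor:prop}. First I would set up the requisite data: degree $d=1$, conductor $q(f)=1$, Dirichlet coefficients $\lambda_f(n) = n^{-it}$ (so Ramanujan's conjecture holds trivially since $|\lambda_f(n)| = 1$), and gamma factor $\gamma(f,s) = \pi^{-s/2}\Gamma((s+it)/2)$, corresponding to the single archimedean parameter $k_1 = it$. The functional equation $\xi(s) = \xi(1-s)$ for $\zeta$, combined with $\overline{\zeta(\bar s)} = \zeta(s)$, translates the identity $\xi(s+it) = \xi(1-s-it)$ into the required functional equation $\Lambda(f,s) = \overline{\Lambda}(f,1-s)$ with $\epsilon(f) = 1$. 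The pole of $\zeta$ at $s=1$ becomes a pole of $\Lambda(f,s)$ at $s = 1-it$, which lies off the line $\R s = 1$ once $t \neq 0$; since the assumption $\log\log \ct \geq 10$ forces $T$ (and hence $t$) to be large, the standing hypothesis that $\Lambda$ has no pole on $\R s = 1$ is satisfied.

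Next I would compute the analytic conductor from \eqref{def:cond}:
$$ C = \frac{1}{\pi}\left|\frac{1}{4} + \frac{it}{2}\right| = \frac{\sqrt{1+4t^2}}{4\pi}. $$
For $T \leq t \leq 2T$ and $T$ in the required range, this gives $C \leq T = \ct$.

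Applying Corollary \ref{cor:prop} then yields
$$ \log|\zeta(\hf + it)| \leq \frac{23}{25}\frac{\log C}{\log^2 \log C} + \frac{3}{8}\frac{\log C}{\log \log C}, $$
and because both $x \mapsto x/\log x$ and $x \mapsto x/(\log x)^2$ are increasing once $x \geq e^2$, replacing $C$ by the larger quantity $\ct$ only enlarges the right-hand side, producing the claimed inequality.

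I do not expect any genuine obstacle: essentially all of the work has already been done in Corollary \ref{cor:prop}, and the only subtle point is the bookkeeping needed to cast $\zeta(s+it)$ in the paper's abstract framework and to pin down its analytic conductor. A minor technical wrinkle is that the spectral parameter $k_1 = it$ is purely imaginary (so $\R k_1 = 0$ sits on the boundary of what is allowed), but the assumption $t \geq T$ with $T$ large keeps us safely away from the pathology this creates, and the estimates in Corollary \ref{cor:prop} apply without change.
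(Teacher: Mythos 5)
There is a genuine gap here, and in fact an outright error. You assert that the pole of $\Lambda(f,s)$ at $s = 1-it$ ``lies off the line $\R s = 1$ once $t\neq 0$,'' and conclude that the paper's standing hypothesis is satisfied so that Corollary \ref{cor:prop} applies without change. But $\R(1-it) = 1$ for real $t$: the pole sits \emph{exactly} on the line $\R s = 1$ (and, via the functional equation, produces a companion pole at $s=-it$ on $\R s = 0$). So the hypothesis ``$\Lambda(f,s)$ has no zero or pole at $\R s = 1$'' is violated, and neither Proposition \ref{HadamardL} nor Theorem \ref{prop:upper} can be invoked as stated, since the Hadamard factorization underlying \eqref{impEqn} requires $\Lambda(f,s)$ to be entire away from $s=0,1$.

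The paper's proof addresses precisely this: it rewrites \eqref{impEqn} for $\zeta(s+it)$ with two additional terms $\tfrac{1}{s-1+it}+\tfrac{1}{s+it}$ coming from the pole of $\zeta(s+it)$ at $s=1-it$ and of the gamma factor at $s=-it$, then retraces the proof of Theorem \ref{prop:upper} carrying these terms along, showing their total contribution is at most $1.4\cdot 10^{-6}$ and hence negligible relative to the bound in Corollary \ref{cor:prop}. Your proposal drops this entire step. The conductor computation and the observation $C\le T=\ct$ are fine and agree with the paper, but without the explicit treatment of the pole terms the argument is incomplete. To repair it you would need to state the corrected Hadamard/log-derivative identity for $\zeta(s+it)$, propagate the two extra rational terms through the integrations in \eqref{halfEqn} and \eqref{sigmaEqn}, and bound the result numerically as the paper does.
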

\begin{rem}
The analytic conductor is a measure of the complexity of the $L$-function. For our examples above, $C \approx \ct.$ Hence we can apply Corollary \ref{cor:prop} to $\ct.$ The details will be discussed in \S 2.
\end{rem}
Finally with some modifications, we can apply Corollary \ref{prop:upper} to Dedekind zeta functions. We will prove the following Corollary in \S 3. 
\begin{corol} \label{cor:dedekind} Let $K/{\mathbb Q}$ be a number field of degree $d$. The Dedekind zeta function 
$$\zeta_K(s) = \prod_{\mathfrak{p}} (1 - (N\pf)^{-s})^{-1} $$
is an $L$-function of degree d with conductor $q$ equal to the absolute value of the discriminant of $K$. If $\log \log C \geq 10$ then 
$$|\zeta_K(\hf)| \leq  2.33\exp\left(\frac{23d}{25} \frac{\log C}{\log^2\log C} + \frac{3}{8} \frac{\log C}{\log \log C} \right).$$
\end{corol}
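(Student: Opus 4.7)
The plan is to adapt the proof of Theorem \ref{prop:upper} to the case $L(f,s)=\zeta_K(s)$, whose only deviation from the hypotheses of that theorem is the simple pole at $s=1$. Two simplifications are available from the outset. First, Dedekind zeta functions automatically satisfy Ramanujan's conjecture: each local factor is $\prod_{\pf\mid p}(1-(N\pf)^{-s})^{-1}$, so the roots $\alpha_i(p)$ lie in $\{0\}\cup\{|z|=1\}$. Hence once the pole is dealt with, the bound for the prime-power sum produced in the proof of Corollary \ref{cor:prop} applies verbatim with $d=[K:\mathbb{Q}]$ and $C$ equal to the analytic conductor of $\zeta_K$.

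To handle the pole, I would work with the entire function
$$\xi_K(s) = s(s-1)\,q^{s/2}\gamma(s)\zeta_K(s),$$
which has order $1$ and still satisfies $\xi_K(s)=\xi_K(1-s)$. Running the Hadamard factorization and Soundararajan-type contour argument of Theorem \ref{prop:upper} with $\xi_K$ in place of $\Lambda(f,s)$, the logarithmic derivative picks up two extra simple-pole terms,
$$
-\frac{\zeta_K'}{\zeta_K}(s) \;=\; -\frac{\xi_K'}{\xi_K}(s) + \frac{1}{s} + \frac{1}{s-1} + (\text{gamma and conductor terms}).
$$
Under GRH the zero sum is still nonpositive, and the prime-power sum is still controlled by the argument of Corollary \ref{cor:prop}, producing the main contributions $\tfrac{23d}{25}\log C/\log^2\log C+\tfrac38\log C/\log\log C$.

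The genuinely new feature is the contribution of the two polar terms $1/s+1/(s-1)$ integrated against the same kernel from Lemma \ref{upsigma}. Since both poles sit at the fixed rational points $s=0,1$ and the kernel depends only on the parameter $\s$ (not on $K$), their joint contribution is an explicit absolute constant, independent of the degree $d$ and the discriminant $q$. A direct numerical evaluation should show that this constant is bounded by $\log 2.33 \approx 0.846$, which upon exponentiation yields the prefactor $2.33$ in the stated inequality.

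The main obstacle is precisely this numerical bookkeeping: one must trace the two residue contributions through the specific contour shift used in Theorem \ref{prop:upper}, keep the choice of $\s$ consistent with the optimization done in the proof of Corollary \ref{cor:prop}, and check that no hidden $C$-dependence is introduced by the pole terms. Beyond this, no new estimate for the zeros or the primes is required, and the corollary follows by combining the polar constant $\leq \log 2.33$ with the generic bound of Corollary \ref{cor:prop}.
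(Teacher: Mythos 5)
Your high-level plan matches the paper's: pass to an entire function that kills the pole at $s=1$, run the Soundararajan-type argument of Theorem \ref{prop:upper}, note that Dedekind zeta functions satisfy Ramanujan automatically, and absorb the pole contribution into an explicit constant. The paper works with $H(s) = (s-1)\zeta_K(s)$ rather than your $\xi_K(s)=s(s-1)q^{s/2}\gamma(s)\zeta_K(s)$; this is a bookkeeping convenience ($H$ keeps the conductor and gamma factors out of the object being bounded, so $\log|\zeta_K(\hf)|=\log|H(\hf)|+\log 2$ immediately, with no $q$- or $\gamma$-dependent correction to subtract at the end), but both completions are viable.

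The real gap in your outline is that you treat the pole contribution as if it were a matter of pushing $1/s+1/(s-1)$ through the existing integrals and reading off a constant. Two things are skipped. First, if you feed the bare $\zeta_K$ into the Lemma \ref{upsigma} machinery, the contour shift produces an extra residue $\frac{x^{1-s}}{(1-s)^2\log x}$ at $w=1-s$, and the subsequent integration of $\R\frac{\zeta_K'}{\zeta_K}(\sigma)$ from $\s$ out to $\infty$ passes through the pole at $\sigma=1$, so the naive integral diverges; this is precisely why the paper applies the contour argument to $H$ and in Lemma \ref{lem:h} isolates the combinations $\frac{1}{(s-1)^2\log x}-\frac{x^{1-s}}{(s-1)^2\log x}-\frac{1}{s-1}$ and $-\frac{x^{-s}}{s^2\log x}$, shows they are entire in $\R s\ge 0$, and — crucially — that they are $\le 0$ for real positive $\sigma$, so they may simply be dropped from the upper bound. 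That sign analysis is the actual content; calling it ``numerical bookkeeping'' understates it. Second, your claim that the residual contribution is ``independent of the degree $d$ and the discriminant $q$'' isn't quite right: after these reductions the leftover pole terms are $\frac{2}{\log x+1}+\frac{1}{\log x}$, which depend on $x=\log^2 C$, plus the $\log 2$ from converting $H(\hf)$ back to $\zeta_K(\hf)$. What is true is that under $\log\log C\ge 10$ this is $\le \tfrac{3}{2\log\log C}+\log 2 < \log 2.33$, which is how the prefactor $2.33$ arises; your numerical target is correct, but it is reached only after the nonpositivity lemma, not by a direct residue count.
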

Acknowledgements: I am very grateful to Professor Soundararajan for his guidance throughout the making of this paper. I would like to thank Xiannan Li for helpful editorial comments. I also want to thank the referee for careful reading of this paper, and helpful comments.
\section{Proof of the Main Theorem and its Corollaries}
We will now prove an explicit upper bound for $L(f, \hf)$ in terms of a sum over powers of primes and parameters associated with $L(f,s)$.  Then we will simplify the sum and those constants to derive the corollaries stated in the introduction. Our main theorem is 
\begin{thm} \label{prop:upper}
Assume GRH and $\Lambda(f,s)$ has no pole or zero at $s=0, 1.$  Let $\lambda_0 = 0.4912...$ denote the unique positive real number satisfying $e^{-\lambda_0} = \lambda_0 + \lambda_0^2/2.$ Then for all $\frac{\log x}{2} \geq \lambda \geq \lambda_0,$ and $\log x \geq 2$ we have
\begin{eqnarray*}
|L(f,\hf)|
&\leq&  \R \sum_{n \leq x} \frac{a(n)}{n^{\hf + \tfrac{\lambda}{\log x}}\log n}\frac{\log\frac{x}{n}}{\log x} +  \left(\frac{1 + \lambda}{2} \right)\frac{\log C}{\log x} + \frac{(\lambda^2 + \lambda)d}{\log^2 x} + \frac{4de^{-\lambda}}{x^{1/2}\log^2 x}.
\end{eqnarray*}
\end{thm}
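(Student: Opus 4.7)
The plan is to follow the contour-integration technique of Soundararajan \cite{Moment}, adapted to the general $L$-function setting. Throughout, write $\s := \tfrac12+\lambda/\log x$, which by the hypothesis $\lambda\leq\tfrac12\log x$ lies in $(\tfrac12,1]$, safely to the right of the critical line. The starting point is to evaluate
\[ M := \frac{1}{2\pi i\,\log x}\int_{(c)}\log L(f,\s+w)\,\frac{x^w}{w^2}\,dw \]
in two ways, with $c$ chosen large enough that $\R(\s+w)>1$ on the line of integration. Expanding $\log L(f,\s+w)=\sum_n a(n)/(n^{\s+w}\log n)$ and using the elementary identity $\tfrac{1}{2\pi i}\int_{(c)}y^w\,dw/w^2=\log y$ for $y\geq1$, vanishing otherwise, yields the short-sum evaluation
\[ M=\sum_{n\le x}\frac{a(n)}{n^{\s}\log n}\,\frac{\log(x/n)}{\log x}, \]
i.e.\ the first term on the right-hand side of the theorem.

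Next I would shift the contour leftwards across the double pole at $w=0$ (whose residue there is $\log L(f,\s)+(L'/L)(f,\s)/\log x$) and continue to the far-left line $\R w=-\tfrac12-\lambda/\log x$. On this shifted contour I would use the functional equation $\Lambda(f,\s+w)=\epsilon(f)\overline{\Lambda}(f,1-\s-w)$ to re-express $\log L(f,\s+w)$ as $\log\overline L(f,1-\s-w)$ plus gamma-factor and conductor logarithms. The $\log\overline L$ piece is now evaluated on a line where its Dirichlet series converges absolutely, and a direct tail bound on $\int|x^w/w^2|\,|dw|$ produces the remainder $4de^{-\lambda}/(x^{1/2}\log^2 x)$, with the $e^{-\lambda}$ coming from $|x^w|=x^{-1/2-\lambda/\log x}$. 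The gamma-factor and conductor logarithms, via a Taylor expansion of Stirling's formula for each $\Gamma((s+k_j)/2)$ about $w=0$, contribute the main term $\tfrac{1+\lambda}{2}\cdot\tfrac{\log C}{\log x}$ together with the correction $(\lambda^2+\lambda)d/\log^2 x$.

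The last step is to pass from $\R\log L(f,\s)$ to $\log|L(f,\hf)|$ using
\[ \log|L(f,\hf)|=\R\log L(f,\s)-\int_0^{\lambda/\log x}\R\frac{L'}{L}(f,\tfrac12+u)\,du, \]
combined with the Hadamard expression $L'/L(f,s)=B-\tfrac12\log q-\gamma'/\gamma(f,s)+\sum_\rho[1/(s-\rho)+1/\rho]$. Under GRH each $\R[1/(\tfrac12+u-\rho)+1/\rho]$ is non-negative for $u>0$, and $B\leq0$ by the functional equation, so discarding the zero sum converts the identity into an upper bound. The role of $\lambda_0$ is to ensure that, after combining the passage above with the identity for $M$, the net coefficient of the discarded zero sum is non-positive; an explicit calculation reduces this to the inequality $e^{-\lambda}\leq\lambda+\lambda^2/2$, whose threshold is the defining relation $e^{-\lambda_0}=\lambda_0+\lambda_0^2/2$. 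I expect the principal obstacle to be the explicit bookkeeping: extracting the precise constants $\tfrac{1+\lambda}{2}$, $(\lambda^2+\lambda)d$, and $4de^{-\lambda}$ rather than unspecified $O$-terms requires carefully tracking the Stirling expansions, the tail bound on the shifted contour, and the exact coefficient in the positivity inequality above.
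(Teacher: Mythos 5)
Your proposal captures the same high-level strategy as the paper (Soundararajan's mollified Perron integral, a bridge from $\sigma_0=\hf+\lambda/\log x$ down to $\hf$ via the Hadamard formula, discarding a zero sum, and the threshold $e^{-\lambda_0}=\lambda_0+\lambda_0^2/2$), and your residue computation at $w=0$ and your identification of the role of $\lambda_0$ are both correct. But there is a genuine technical gap in the way you set up the Perron integral. You integrate $\log L(f,\sigma_0+w)\,x^w/w^2$ and propose to shift the contour past $\Re w=-\lambda/\log x$. On GRH, $\log L(f,\sigma_0+w)$ has logarithmic branch points at $w=\rho-\sigma_0$ for every nontrivial zero $\rho=\hf+i\gamma$, all lying on that line; shifting across them produces branch-cut integrals, not residues, and you never account for these. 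The paper avoids this entirely: its Lemma \ref{upsigma} applies Perron to $-L'/L(f,s+w)$, which is meromorphic with clean simple poles at the zeros and trivial zeros, producing the explicit terms $\sum_\rho x^{\rho-s}/(\rho-s)^2$ and $\sum_{j,n} x^{-2n-k_j-s}/(2n+k_j+s)^2$; one then integrates that identity over $\sigma\in[\sigma_0,\infty)$ to recover $\log|L(f,\sigma_0)|$ with the $\tfrac{1}{\log n}$ factor and, crucially, with the factor $e^{-\lambda}/\lambda$ multiplying $G(\sigma_0)$ from the zero sum and the $4de^{-\lambda}/(x^{1/2}\log^2 x)$ term from the trivial zeros. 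If you trace through the branch-cut contributions carefully (taking horizontal cuts to $-\infty$), you would in fact rediscover exactly these integrals, but that work is missing, and it is precisely the hard part.

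A few more concrete points. Your proposed shifted line $\Re w=-\hf-\lambda/\log x$ puts $\Re(1-\sigma_0-w)=1$, which is on the boundary of absolute convergence of the dual Dirichlet series, not inside it; the paper never invokes the functional equation here because it shifts all the way left and collects residues at the trivial zeros instead, with the gamma-factor information entering only through the Hadamard identity (\ref{impEqn}) plus the pointwise bound $\Re\frac{\Gamma'}{\Gamma}(z)\le\log|z|$ of Lemma \ref{Lem:Ipart}, a bound you do not supply. Your claim that ``$B\le 0$ by the functional equation'' is also not quite the fact needed: the paper uses the identity $\Re\big(-b+\sum_\rho 1/\rho\big)=0$ so that $b$ and the $1/\rho$ terms cancel against each other in (\ref{imp2}), giving the exact expression $-\Re\frac{L'}{L}(f,s)=\tfrac12\log\tfrac{q}{\pi^d}+\tfrac12\sum_j\Re\frac{\Gamma'}{\Gamma}(\tfrac{s+k_j}{2})-G(s)$, rather than merely an inequality. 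Finally, the clean coefficient $\frac{1}{\log x}\big(\frac{e^{-\lambda}}{\lambda}-1-\frac{\lambda}{2}\big)$ of $G(\sigma_0)$, which is what you need to be non-positive for $\lambda\ge\lambda_0$, emerges from adding the two halves (\ref{halfEqn}) and (\ref{sigmaEqn}); your sketch correctly anticipates this but does not show how the $e^{-\lambda}/\lambda$ factor arises (it comes from $\int_{\sigma_0}^\infty x^{\hf-\sigma}/|\rho-\sigma_0|^2\,d\sigma$ with $|\rho-\sigma_0|\ge\lambda/\log x$ on GRH, not from the $|x^w|$ on your shifted contour).
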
 
\begin{rem} The above easily leads to an upper bound for $\log\left|L\left(f, \hf + it\right)\right|$ as well.  Indeed, we can set $L_{new}(f, s) = L(f, s+it)$, so that $L\left(f, \hf + it\right) = L_{new}\left(f, \hf\right).$ $L_{new}(f, s)$ satisfies the functional equation (\ref{fncEqn}), but with $k_j + it$ in the gamma factor in (\ref{gammaEqn}). 
\end{rem}
{\bf Proof of Theorem \ref{prop:upper}} \,\,
Let $\rho = \h + i\gamma$ run over the non-trivial zeros of $L(f,s).$ Define
$$G(s) = \R \sum_{\rho} \frac{1}{s - \rho} = \sum_{\rho} \frac{\sigma - \h}{(\sigma - \h)^2 + (\gamma - t)^2}.$$
To prove Theorem \ref{prop:upper}, we need to use the Hadamard factorization formula below. The proof can be found in \cite{IK} (see Theorem 5.6).
\begin{prop} \label{HadamardL}
Let $L(f,s)$ be an $L$-function, and $s = \sigma + it$. There exist constants $a, b$ such that 
$$\Lambda(f,s) = e^{a+bs}\prod_{\rho \neq 0,1} \left(1-\frac{s}{\rho} \right)e^{s/\rho}.$$
where $\rho$ runs over all zeros of $\Lambda(f,s)$. Hence
\begin{equation}\label{impEqn}
-\frac{L'}{L}(f,s) = \frac{1}{2}\log \frac{q}{\pi^d} + \frac{1}{2}\sum_{j=1}^d \frac{\Gamma'}{\Gamma}\left(\frac{s + k_j}{2} \right) - b - \sum_{\rho \neq 0, 1}\left( \frac{1}{s-\rho} + \frac{1}{\rho}\right)
\end{equation}
is uniformly and absolutely convergent in compact subsets which have no zeros or poles. Furthermore $\R (- b + \sum \tfrac{1}{\rho} ) = 0.$
\end{prop}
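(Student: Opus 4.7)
The proof has four parts: (i) show that $\Lambda(f,s)$ is entire of order at most one, (ii) invoke Hadamard's factorization, (iii) differentiate logarithmically and match the two expressions for $\frac{\Lambda'}{\Lambda}(f,s)$, and (iv) extract the real-part identity from the functional equation.

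First I would verify that $\Lambda(f,s)$ is entire of order at most one. Entirety follows from the standing assumptions: $L(f,s)$ is holomorphic away from $s=1$ (with the pole at $s=1$ excluded by hypothesis), while the trivial poles of the gamma factors at $s = -k_j - 2m$ are cancelled by the corresponding trivial zeros of $L(f,s)$ forced by the functional equation. For the order bound, in $\R s \geq 2$ absolute convergence gives $L(f,s) = O(1)$, the factor $|q^{s/2}|$ grows only polynomially, and Stirling's formula yields $|\gamma(f,s)| \leq \exp(C|s|\log|s|)$. The functional equation transports this estimate to $\R s \leq -1$, and the Phragm\'{e}n--Lindel\"{o}f principle fills in the strip $-1 \leq \R s \leq 2$.

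Given order $\leq 1$ and $\Lambda(f,0) \neq 0$, Hadamard's factorization theorem produces constants $a, b$ and an absolutely convergent canonical product $\prod_\rho (1-s/\rho)e^{s/\rho}$ over the zeros $\rho$ of $\Lambda$, which coincide with the nontrivial zeros of $L(f,s)$ because $\gamma(f,s)$ is zero-free. The convergence rests on $\sum_\rho |\rho|^{-2} < \infty$, which follows from the counting estimate $N(T) \ll T \log T$ implied by order one via Jensen's formula. Taking the logarithmic derivative termwise and matching it with $\frac{\Lambda'}{\Lambda}(f,s) = \frac{1}{2}\log q + \frac{\gamma'}{\gamma}(f,s) + \frac{L'}{L}(f,s)$, together with $\frac{\gamma'}{\gamma}(f,s) = -\frac{d}{2}\log \pi + \frac{1}{2}\sum_j \frac{\Gamma'}{\Gamma}((s+k_j)/2)$, recovers (\ref{impEqn}) after rearrangement; absolute and uniform convergence on compacta avoiding the zeros follows from the termwise estimate $\frac{1}{s-\rho} + \frac{1}{\rho} = O(|s|^2/|\rho|^2)$ combined with $\sum |\rho|^{-2} < \infty$.

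For the real-part identity, I would logarithmically differentiate the functional equation $\Lambda(f,s) = \epsilon(f)\overline{\Lambda}(f,1-s)$ to get $\frac{\Lambda'}{\Lambda}(f,s) = -\overline{\frac{\Lambda'}{\Lambda}(f,1-\bar s)}$, then specialize to $s = \hf$; this forces $\frac{\Lambda'}{\Lambda}(f,\hf)$ to be purely imaginary. Substituting $s = \hf$ into the Hadamard-product form gives $b + \sum_\rho [\frac{1}{\hf - \rho} + \frac{1}{\rho}]$, and the symmetry of the zeros under $\rho \mapsto 1-\bar\rho$ forced by the functional equation (sharpened by GRH to $\hf - \rho \in i\R$) makes the contribution $\sum 1/(\hf - \rho)$ purely imaginary when summed with the natural symmetric pairing. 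Taking real parts then yields the stated identity relating $\R(b)$ and $\R\sum 1/\rho$. The main obstacle is the order-one bound of Step 1: establishing it cleanly inside the critical strip requires Stirling combined with a Phragm\'{e}n--Lindel\"{o}f argument, and one must separately justify that the trivial gamma-factor poles really are cancelled by trivial zeros of $L(f,s)$, which uses the interplay of the functional equation with the absolute convergence of the Dirichlet series in $\R s > 1$.
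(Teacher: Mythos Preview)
The paper does not prove this proposition; it simply cites Iwaniec--Kowalski, Theorem 5.6. Your four-step outline is exactly the standard argument found there (entire of order $\leq 1$ via Stirling and Phragm\'en--Lindel\"of, Hadamard product, logarithmic differentiation, then the functional equation for the real-part identity), so in that sense you are aligned with the reference the paper invokes.

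A few points worth tightening. First, $|q(f)^{s/2}| = q(f)^{\sigma/2}$ does not grow ``only polynomially'' in $|s|$; it grows like $\exp(c|s|)$, which is still of order $1$, so your conclusion survives but the wording should change. Second, your extraction of the real-part identity by specializing to $s=\tfrac{1}{2}$ tacitly assumes $\Lambda(f,\tfrac{1}{2})\neq 0$, which need not hold; you should either work at a nearby non-zero point and pass to a limit, or argue directly from the identity $\frac{\Lambda'}{\Lambda}(f,s) = -\overline{\frac{\Lambda'}{\Lambda}(f,1-\bar s)}$ together with the pairing $\rho \leftrightarrow 1-\bar\rho$ on the Hadamard side, which gives $2\R b = -\sum_\rho \R\bigl(\tfrac{1}{\rho}+\tfrac{1}{1-\rho}\bigr)$ and hence the result without touching $s=\tfrac12$. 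Third, you invoke GRH to make $\sum 1/(\tfrac12-\rho)$ purely imaginary, but the identity $\R b = -\sum_\rho \R(1/\rho)$ is unconditional; since the paper assumes GRH throughout this is harmless, but it is worth knowing that this particular step does not need it. Finally, note that the sign in the paper's displayed statement ``$\R(-b+\sum 1/\rho)=0$'' is what is actually used in deriving \eqref{imp2} only if one reads it as $\R b + \sum_\rho \R(1/\rho)=0$; your argument produces the latter, which is the correct relation.
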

By (\ref{impEqn}), since $\R (- b + \sum \tfrac{1}{\rho} ) = 0,$ if $L(f,s) \neq 0$ we have
\begin{equation}\label{imp2}
-\R \frac{L'}{L}(f,s) =  \frac{1}{2}\log \frac{q}{\pi^d} + \frac{1}{2}\sum_{j=1}^d \R\frac{\Gamma'}{\Gamma}\left(\frac{s + k_j}{2} \right) - G(s).
\end{equation}
We need the following lemma to find upper bound for $\R\frac{\Gamma'}{\Gamma}\left(\frac{s + k_j}{2} \right).$ The proof of the lemma can be found in Appendix. 
\begin{lemma} \label{Lem:Ipart}
Let $z = x + iy,$ where $x \geq \tfrac{1}{4}.$ Then 
$$\R \frac{\Gamma'}{\Gamma} (z) \leq \log |z|. $$
\end{lemma}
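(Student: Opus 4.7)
My plan is to use Binet's second integral representation of the digamma function: for $\Re z > 0$,
\[
\psi(z) = \log z - \frac{1}{2z} - 2\int_0^\infty \frac{t\,dt}{(t^2+z^2)(e^{2\pi t}-1)}
\]
(see, e.g., Whittaker--Watson, \S 12.31). Writing $z = x + iy$ and taking real parts, using $\Re \log z = \log|z|$ and $\Re\bigl(1/(t^2+z^2)\bigr) = (t^2+x^2-y^2)/|t^2+z^2|^2$, the desired inequality $\Re \psi(z) \le \log|z|$ is equivalent to
\[
\mathcal E(z) := \frac{x}{2|z|^2} + 2\int_0^\infty \frac{t\,(t^2+x^2-y^2)}{|t^2+z^2|^2\,(e^{2\pi t}-1)}\,dt \ge 0.
\]
I would then split according to the sign of $y^2 - x^2$. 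If $|y|\le x$, the integrand is pointwise non-negative, hence $\mathcal E(z)\ge x/(2|z|^2)>0$, and the inequality is immediate.

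The substantive case is $|y| > x$, where the integrand of the correction is negative on $(0, \sqrt{y^2-x^2})$ and positive on $(\sqrt{y^2-x^2},\infty)$. Here I would apply the substitution $s = t^2$ followed by the translation $r = s - (y^2-x^2)$, using the identity $|t^2+z^2|^2 = (s - (y^2-x^2))^2 + (2xy)^2$ (valid since $|z|^4 = (y^2-x^2)^2 + (2xy)^2$) to recast the integral as
\[
\int_{-(y^2-x^2)}^{\infty} \frac{r}{r^2+(2xy)^2} \cdot \frac{dr}{e^{2\pi\sqrt{r+(y^2-x^2)}}-1}.
\]
The kernel $r \mapsto r/(r^2+(2xy)^2)$ is odd in $r$, so splitting at $r = y^2-x^2$ and symmetrizing $r\mapsto -r$ on the interval $(-(y^2-x^2),\,y^2-x^2)$ pairs negative contributions with positive ones. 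Monotonicity of $r \mapsto 1/(e^{2\pi\sqrt{r+(y^2-x^2)}}-1)$ bounds the residual difference between the symmetric weights, and this residual is then absorbed by the explicit term $x/(2|z|^2)$ using the hypothesis $x \ge 1/4$.

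The main obstacle is the absorption step in the case $|y|>x$. Because the positive and negative pieces of the integral almost cancel, the analysis is delicate, and the threshold $x \ge 1/4$ is precisely what allows the argument to close. A weaker condition on $x$ would require a sharper estimate on the symmetric difference of the exponential weights $1/(e^{2\pi\sqrt{r+(y^2-x^2)}}-1)$ near $r=0$, or an alternative treatment via the Phragm\'en-Lindel\"of principle applied to the bounded harmonic function $\log|z|-\Re\psi(z)$ on the half-plane $\Re z \ge 1/4$ (which would reduce the problem to verifying the inequality on the boundary line).
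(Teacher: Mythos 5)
Your starting identity is the same as the paper's: Binet's second integral for $\psi$, giving $\R\tfrac{\Gamma'}{\Gamma}(z) = \log|z| - \R\tfrac{1}{2z} + I(z)$ with $I(z) = -\R\int_0^\infty \tfrac{2t}{(t^2+z^2)(e^{2\pi t}-1)}\,dt$, and your disposal of the easy case $|y|\le x$ (integrand pointwise nonnegative, so $I(z)\le 0$ and $\R\tfrac{1}{2z}\ge 0$) agrees with the paper. The genuine gap is in the case $|y| > x$, which is the entire content of the lemma. Your change of variables $r = t^2 - (y^2-x^2)$ and the symmetrization of the odd kernel $r/(r^2+(2xy)^2)$ on $[-(y^2-x^2),\,y^2-x^2]$ is a reasonable setup, but the crucial step is only asserted, not proved: you never estimate the symmetrized deficit $\int_0^a \frac{r}{r^2+(2xy)^2}\bigl(w(-r)-w(r)\bigr)\,dr$, where $a=y^2-x^2$ and $w(r)=1/(e^{2\pi\sqrt{r+a}}-1)$, nor do you compare it against the positive terms $\frac{x}{2|z|^2} + \int_a^\infty \frac{r}{r^2+(2xy)^2}w(r)\,dr$. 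Saying that ``monotonicity bounds the residual'' and that ``$x\ge 1/4$ is precisely what allows the argument to close'' names the difficulty but does not resolve it, and you acknowledge as much. The near-cancellation you flag is real; this is why the paper also cannot close with one clean inequality. Its proof integrates $I(z)$ by parts and splits on $|z|$: for $|z|\ge 4$ it proves the explicit bound $I(z)\le \frac{15}{128|z|^2} + \frac{4}{3}e^{-2\pi|z|/3}\bigl(1+|z|^{-2}\bigr)$, which is beaten by $\R\frac{1}{2z}\ge \frac{1}{8|z|^2}$; for $|z|<4$ it drops the favorable tail of $I(z)$, shows the remaining expression is decreasing in $x$, and verifies numerically at $x=1/4$ over $1/4\le y\le 4$. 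Your Phragm\'en--Lindel\"of alternative likewise only relocates the verification to the line $\R z = 1/4$ without performing it. As written, the proposal is an outline with the central absorption estimate missing; to become a proof it needs either that quantitative estimate or a concrete large-$|z|$ versus small-$|z|$ dichotomy of the kind the paper uses.
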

From Lemma \ref{Lem:Ipart} and (\ref{imp2}), we obtain
\begin{equation} \label{eqn:imp3}
-\R \frac{L'}{L}(f,s) \leq  \frac{1}{2}\log \frac{q}{\pi^d} + \frac{1}{2}\sum_{j=1}^d \log \left|\frac{s + k_j}{2} \right| - G(s).
\end{equation}
When we integrate the inequality above as $\sigma$ varies from $\h$ to $\s = \h + \frac{\lambda}{\log x}$, we get 
\begin{eqnarray}
&& \log|L(f,\hf)| - \log|L(f,\s)| \label{halfEqn} \\
&\leq& \frac{\lambda}{2\log x} \log \frac{q}{\pi^d} + \frac{1}{2}\sum_{j=1}^d \int_{\hf}^{\s} \log \left| \frac{\sigma + k_j}{2} \right| \> d\sigma - \hf\sum_\rho \log \frac{(\s - \hf)^2 + \gamma^2}{\gamma^2} \nonumber \\
&\leq& \frac{\lambda}{2\log x} \log \frac{q}{\pi^d} + \left( \frac{\lambda}{2\log x} \right)\sum_{j=1}^d\log\left|\frac{\s + k_j}{2} \right| - \hf (\s - \hf)G(\s) \nonumber \\
&\leq&  \frac{\lambda}{2\log x} \log C + \frac{\lambda^2 d}{\log^2 x}  - \frac{\lambda}{2\log x}G(\s), \nonumber
\end{eqnarray}
where we use the fact that $\log(1+x^2) \geq \frac{x^2}{1+x^2}$ and for $\lambda \leq \tfrac{\log x}{2},$
\begin{equation} \label{eqn:logabs}
\log\left|\frac{\s + k_j}{2} \right| \leq \log\left|\frac{1}{4} + \frac{k_j}{2} \right| + \log \left|1 + \frac{\lambda}{(\log x)(\hf + k_j)} \right| \leq \log\left|\frac{1}{4} + \frac{k_j}{2} \right| + \frac{2\lambda}{\log x}.
\end{equation}
To prove Theorem \ref{prop:upper}, we need an upper bound for $\log|L(f, \s)|$, which we will obtain from Lemma \ref{upsigma} below.  This lemma is a version of Lemma 1 in \cite{Moment} for general $L$-functions. The proof of Lemma \ref{upsigma} is essentially the same as the one in \cite{Moment}; however we provide the sketch of the proof here for completeness.
\begin{lemma} \label{upsigma}
Unconditionally, for any s not conciding with 1, 0 or a zero of $L(f,s)$, and for any $x \geq 2,$ we have
\begin{eqnarray*}
-\frac{L'}{L}(f,s) &=& \sum_{n \leq x} \frac{a(n)}{n^{s}}\frac{\log\frac{x}{n}}{\log x} + \frac{1}{\log x}\left( \frac{L'}{L}(f,s) \right)' + \frac{1}{\log x} \sum_{\rho \neq 0,1} \frac{x^{\rho - s}}{(\rho - s)^2} \\
&& + \frac{1}{\log x} \sum_{j=1}^d \sum_{n=0}^{\infty} \frac{x^{-2n-k_j-s}}{(2n + k_j +s)^2},
\end{eqnarray*}
where $a(n)$ is defined in (\ref{Leqn}).
\end{lemma}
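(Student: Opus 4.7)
\medskip

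\noindent\textbf{Proof plan for Lemma \ref{upsigma}.}

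The plan is to apply a standard contour-integration (Perron-style) argument using the Mellin kernel $x^w/w^2$, whose inverse Mellin transform is $\log(x/n)$ cut off at $n=x$, and then shift the contour to the left, collecting residues at $w=0$, at the points $w=\rho-s$ coming from the nontrivial zeros of $L(f,\cdot)$, and at the points $w=-2n-k_j-s$ coming from the ``trivial'' zeros supplied by the gamma factor.

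Concretely, I would start from
$$ I(s,x) := \frac{1}{2\pi i}\int_{(c)} \Bigl(-\frac{L'}{L}(f,s+w)\Bigr)\,\frac{x^{w}}{w^{2}}\,dw $$
with $c$ chosen so that $c+\R s>1$, so that the Dirichlet series (\ref{Leqn}) converges absolutely on the line of integration. Interchanging summation and integration and using the classical identity $\frac{1}{2\pi i}\int_{(c)} y^{w}\,dw/w^{2}=\log y$ for $y\ge 1$ and $0$ otherwise, one obtains
$$ I(s,x) = \sum_{n\le x}\frac{a(n)}{n^{s}}\,\log\!\frac{x}{n}, $$
which after dividing by $\log x$ produces the main prime-power sum in the statement.

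Next I would shift the contour from $\R w=c$ to $\R w=-U$ for $U\to\infty$, collecting the following residues of $\bigl(-L'/L(f,s+w)\bigr)\,x^{w}/w^{2}$: (i) at $w=0$, which is a double pole because $s$ avoids the poles/zeros of $L(f,\cdot)$; expanding $-L'/L(f,s+w)=-L'/L(f,s)-(L'/L)'(f,s)\,w+O(w^{2})$ and $x^{w}=1+w\log x+O(w^{2})$ gives residue
$$ -\frac{L'}{L}(f,s)\,\log x \;-\;\Bigl(\frac{L'}{L}\Bigr)'(f,s); $$
(ii) at each nontrivial zero $\rho$ of $L(f,\cdot)$, the simple pole $w=\rho-s$ of $-L'/L$ yields residue $-x^{\rho-s}/(\rho-s)^{2}$; (iii) at each trivial zero $w=-2n-k_j-s$ coming from the gamma factor $\Gamma((s+w+k_j)/2)$, one similarly gets $-x^{-2n-k_j-s}/(2n+k_j+s)^{2}$. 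Equating $I(s,x)$ with the sum of these residues (after the shift) and dividing by $\log x$ then rearranging for $-L'/L(f,s)$ produces exactly the identity of the lemma, up to overall signs that I would check by comparing the orientation of the rectangular contour.

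The main technical obstacle is justifying the contour shift, namely showing that the integrals over the horizontal truncations and over the far-left vertical line $\R w=-U$ tend to zero as the height and $U$ tend to infinity. This requires a standard but careful use of the Hadamard factorization (Proposition \ref{HadamardL}) together with Stirling's formula for the $\Gamma'/\Gamma$ term, which gives polynomial growth of $L'/L(f,s+w)$ away from zeros and poles; combined with the decay of $x^{w}/w^{2}$ in $|\I w|$ and the fact that on the far-left line the gamma factor forces $L'/L(f,s+w)$ to be controlled by $\log|w|$, the contribution from these pieces vanishes in the limit and the sums over $\rho$ and over $(n,j,k_j)$ converge absolutely (so no bracketing of zeros is needed). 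The remainder of the argument is bookkeeping and sign-checking; I would follow \cite{Moment} for the details, as the author notes.
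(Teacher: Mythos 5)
Your proposal is correct and follows essentially the same route as the paper: the paper also starts from $\frac{1}{2\pi i}\int_{(c)}\bigl(-\tfrac{L'}{L}(f,s+w)\bigr)\tfrac{x^w}{w^2}\,dw$ with $c=\max(1,2-\sigma)$, evaluates it term-by-term to get $\sum_{n\le x}\tfrac{a(n)}{n^s}\log\tfrac{x}{n}$, shifts the contour left to collect the double pole at $w=0$ and the simple poles at the nontrivial and trivial zeros, and equates. Your residue bookkeeping (including the sign at $w=0$ coming from the two-term Taylor expansions of $x^w$ and $-L'/L(f,s+w)$) matches the identity in the lemma, so the only thing the paper does differently is state the contour shift more tersely, deferring the convergence justification to \cite{Moment} exactly as you propose.
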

\begin{proof} With $c =$ max$(1, 2-\sigma),$ integrating term by term using the Dirichlet series expansion of $\frac{-L'}{L}(f,s)$, we have
$$\frac{1}{2\pi i} \int_{c-i\infty}^{c+i\infty} -\frac{L'}{L}(f, s+w)\frac{x^w}{w^2} \>dw = \sum_{n \leq x}\frac{a(n)}{n^{s}}\log\frac{x}{n}. $$
On the other hand, moving the line of integration to the left and calculating residues gives
\begin{eqnarray*}
\log x \left (-\frac{L'}{L}(f,s) -  \frac{1}{\log x}\left( \frac{L'}{L}(f,s) \right)' - \frac{1}{\log x} \sum_{\rho \neq 0,1} \frac{x^{\rho - s}}{(\rho - s)^2} - \frac{1}{\log x} \sum_{j=1}^d \sum_{n=0}^{\infty} \frac{x^{-2n-k_j-s}}{(2n + k_j +s)^2} \right).
\end{eqnarray*}
Equating these two expressions, we obtain the lemma. 
\end{proof}
We take $s= \sigma$ in Lemma (\ref{upsigma}), extract the real parts of both sides, and integrate over $\sigma$ from $\s$ to $\infty.$ Thus for $x \geq 2$
\begin{eqnarray*}
\log|L(f,\s)| &=& \R \big(\sum_{n \leq x} \frac{a(n)}{n^{\s}\log n}\frac{\log\frac{x}{n}}{\log x} - \frac{1}{\log x}\frac{L'}{L}(f,\s) + \frac{1}{\log x} \sum_{\rho \neq 0,1} \int_{\s}^{\infty} \frac{x^{\rho - \sigma}}{(\rho - \sigma)^2} \> d\sigma \\
&& + \frac{1}{\log x}  \int_{\s}^{\infty} \sum_{j=1}^d \sum_{n=0}^{\infty} \frac{x^{-2n-k_j-\sigma}}{(2n + k_j + \sigma)^2} \> d\sigma\big).
\end{eqnarray*}
Moreover 
$$
 \sum_\rho \left| \int_{\s}^{\infty} \frac{x^{\rho - \sigma}}{(\rho - \sigma)^2} \>d \sigma \right| \leq \sum_\rho \int_{\s}^{\infty} \frac{x^{\h-\sigma}}{|\rho - \s|^2} \>d\sigma = \sum_\rho \frac{x^{\h-\s}}{|\rho - \s|^2 \log x} = \frac{e^{-\lambda}G(\s)}{\lambda},
$$
and for $\R k_j \geq 0,$
\begin{eqnarray*}
\R \int_{\s}^{\infty} \sum_{j=1}^d \sum_{n=0}^{\infty} \frac{x^{-2n-k_j-\sigma}}{(2n + k_j + \sigma)^2} \> d\sigma 
\leq \sum_{j=1}^d \frac{x^{-\R k_j-\s}}{\log x |k_j + \s|^2} \sum_{n=0}^{\infty} x^{-2n}
\leq \frac{4de^{-\lambda}}{x^{1/2}\log x}.
\end{eqnarray*}
Hence using the previous two lines and (\ref{eqn:imp3}), we deduce that 
\begin{eqnarray} \label{sigmaEqn} 
&& \log|L(f, \s)|  \\
&\leq& \R \sum_{n \leq x} \frac{a(n)}{n^{\s}\log n}\frac{\log\frac{x}{n}}{\log x} + \frac{1}{\log x} \left( \frac{1}{2}\log \frac{q}{\pi^d} + \frac{1}{2}\sum_{j=1}^d \log \left|\frac{\s + k_j}{2} \right| - G(\s)\right) \nonumber  \\
&&  +  \frac{4de^{-\lambda}}{x^{1/2}\log^2 x} +  \frac{e^{-\lambda}G(\s)}{\lambda\log x}. \nonumber
\end{eqnarray}
Adding the inequalities (\ref{halfEqn}), (\ref{sigmaEqn}) and using (\ref{eqn:logabs}) 
we get 
\begin{eqnarray} \label{eqn:ignorenegative}
\log|L(f, \hf)|
&\leq&  \R \sum_{n \leq x} \frac{a(n)}{n^{\s}\log n}\frac{\log\frac{x}{n}}{\log x} +  \left(\frac{\lambda + 1}{2\log x} \right)\log C + \frac{(\lambda^2 + \lambda)d}{\log^2 x}  \\
&& + \frac{4de^{-\lambda}}{x^{1/2}\log^2 x} + \frac{G(\s)}{\log x}\left( \frac{e^{-\lambda}}{\lambda}- 1 - \frac{\lambda}{2} \right).  \nonumber
\end{eqnarray}
For $\lambda \geq \lambda_0$, the term involving $G(\s)$ above gives a negative contribution, and we can omit it. Hence the theorem is proved.
\\
\\
{\bf Proof of Corollary \ref{cor:prop}:} \\
Since our $L$-function $L(f, s)$ satisfies Ramanujan's conjecture, $|a(n)| \leq d\Lambda(n).$ From Theorem \ref{prop:upper} picking $\lambda = 0.5$, we have
\begin{equation}   \label{eqn:maincor}
\log |L(f, \hf)| \leq \sum_{n \leq x} \frac{d\Lambda(n)}{n^{\hf + \tfrac{1}{2\log x}}\log n}\frac{\log\frac{x}{n}}{\log x} + \frac{3}{4\log x} \log C + d\left(\frac{3}{4\log^2 x} + \frac{4e^{-0.5}}{x^{1/2}\log^2 x} \right). 
\end{equation}    
When $\log x \geq 20,  \frac{3}{4\log^2 x} + \frac{4e^{-0.5}}{x^{1/2}\log^2 x} \leq 0.0019.$ Now we need to find the upper bound of the sum over powers of primes. We shall prove the following inequality
\begin{equation} \label{eqn:sumpart}
\sum_{n \leq x} \frac{\Lambda(n)}{n^{\hf + \tfrac{\lambda}{\log x}}\log n}\frac{\log\frac{x}{n}}{\log x} \leq 3.675\frac{x^{1/2}}{\log^2 x}.
\end{equation}
\\
Let $f(t) = \frac{1}{t^{\hf + \tfrac{\lambda}{\log x}}\log t}\frac{\log\frac{x}{t}}{\log x}.$ By partial summation,  
\begin{eqnarray*}
\sum_{n \leq x} \frac{\Lambda(n)}{n^{\hf + \tfrac{\lambda}{\log x}}\log n}\frac{\log\frac{x}{n}}{\log x} &=& - \int_2^x (\sum_{n \leq t} \Lambda(n))f'(t) \> dt \\
\end{eqnarray*}
Since we assume GRH, using the result of Lemma 8 and (3.21) in \cite{RS} (pick $\delta = \frac{2}{\sqrt{2 + \sqrt{x}}}$), we obtain that for $t > 10,$
$$\sum_{n \leq t} \Lambda(n) \leq t + 0.0463\left(\frac{2\sqrt{2 + \sqrt{t}} + 2}{\sqrt{t}}\right)t$$
For $t \geq 10^5,$ we have
$$ \sum_{n \leq t} \Lambda(n) < (1.006) t.$$
The inequality is also true for $10^5 > t \geq 2$ by numerical experiment. Therefore,
\begin{eqnarray*}
\sum_{n \leq x} \frac{\Lambda(n)}{n^{\hf + \tfrac{\lambda}{\log x}}\log n}\frac{\log\frac{x}{n}}{\log x} &\leq& -1.006\int_2^x t f'(t) \> dt  \\
&=& 1.006\left(\frac{2\log\tfrac{x}{2}}{2^{1/2 + 1/(2\log x)}\log 2\log x} + \int_2^x f(t) \> dt \right)
\end{eqnarray*}
We change variable $t = x/y$ and obtain
$$\int_2^x f(t) \> dt = \frac{e^{-\lambda}x^{1/2}}{\log^2 x} \int_1^{x/2} \frac{\log y}{y^{3/2- 1/(2\log x)}(1 - \tfrac{\log y}{\log x})} \> dy \leq 5.961\frac{e^{-\lambda}x^{1/2}}{\log^2 x},$$
where the last inequality follows because the second integral above is a decreasing function of $x$ when $x>10^4$.  The constant appearing on the right hand side is derived by substituing $x = e^{20}$ in the integral. Finally by absorbing the constant term into $\frac{x^{1/2}}{\log^2 x}$ term, we derive (\ref{eqn:sumpart}). Choosing $x = \log^2 C $ in (\ref{eqn:maincor}) and applying (\ref{eqn:sumpart}), we prove the corollary. 
\\
\\
{\bf Proof of Corollary \ref{cor:dirichlet}:}  The Dirichlet $L$-function $L(s + it,\chi)$ is an $L$-function of degree 1 with conductor $q$ and satisfies Ramanujan's conjecture. The gamma factor is $\gamma(s) = \pi^{-s/2}\Gamma \left(\tfrac{s + it}{2} \right),$ so the analytic conductor of $L(s + it, \chi)$ as defined in (\ref{def:cond}) is $C = \frac{q}{2\pi} \sqrt{\tfrac{1}{4} + t^2}.$ For any $t$, $\log C  \leq \log q\big(\hf + |t| \big).$ 
Therefore Corollary \ref{cor:prop} can be applied to $\ct = q\big(\hf + |t| \big),$ and we then obtain the corollary.   
\\
\\
{\bf Proof of Corollary \ref{cor:cuspform}:} $L(f,s)$ is an $L$-function of degree 2 with conductor q and gamma factor
$$\gamma(f,s) = \pi^{-s} \Gamma\left(\frac{s + (k-1)/2}{2} \right)\Gamma\left(\frac{s + (k+1)/2}{2} \right). $$
$L(f,s)$ satisfies Ramanujan's conjecture and its analytic conductor $C$ is $\tfrac{q}{\pi^2}\left(\tfrac{k}{2} - \tfrac{1}{4} \right)\left(\tfrac{k}{2} + \tfrac{3}{4} \right).$ For $k \geq 1$, we have
$$\log C = \log qk^2 - \log 4\pi^2 + \log \big(1 + \tfrac{1}{k} - \tfrac{3}{4k^2}  \big) \leq \log \ct - \log 4\pi^2 +  \tfrac{1}{k} - \tfrac{3}{4k^2}  \leq \log \ct.$$
Therefore we can apply Corollary \ref{cor:prop} to $\ct = qk^2$ and get the result. 
\\
\\
{\bf Proof of Corollary \ref{cor:zetafunction}} The Riemann zeta function $\zeta(s + it)$ is an $L$-function of degree 1. The gamma factor is $\gamma(s) = \pi^{-s/2}\Gamma\big(\tfrac{s + it}{2} \big).$  We can still apply Corollary \ref{cor:prop}, but we need to add a constant term derived from the pole $s = 1 - it$. 
The analytic conductor $C$ is $\tfrac{1}{2\pi}\sqrt{\tfrac{1}{4} + t^2}.$ For $T \leq t \leq 2T$ and $e^{20} \leq (\log \ct)^2 \leq T^2,$ 
$$\log \ct \leq \log t + \log \frac{1}{2\pi} + \frac{1}{8t^2} \leq  \log T + \log \frac{1}{\pi} + \frac{1}{8t^2} \leq \log T.$$
Therefore we can apply Corollary $\ref{cor:prop}$ to $\ct = T.$ 
Now we consider (\ref{impEqn}). For $\zeta(s + it)$ it becomes 
$$-\frac{\zeta '}{\zeta}(s + it) = \frac{1}{2}\log \frac{q}{\pi^d} + \frac{1}{2}\sum_{j=1}^d \frac{\Gamma'}{\Gamma}\left(\frac{s + k_j}{2} \right) - b - \sum_{\rho \neq 0, 1}\left( \frac{1}{s-\rho} + \frac{1}{\rho}\right) + \frac{1}{s - 1 + it} + \frac{1}{s + it}.$$
Following the proof of Theorem \ref{prop:upper}, we obtain that the contribution of the pole terms is bounded by $1.4 \cdot 10^{-6},$ which is negligible when we apply Corollary \ref{cor:prop}.  

\section{Dedekind Zeta Functions}
Our upper bound can also be applied to Dedekind zeta functions. In this section, we will prove Corollary \ref{cor:dedekind}.
\begin{proof} The gamma factor of $\zeta_K(s)$ is 
$$\gamma(s) = \pi^{-ds/2}\Gamma\left(\frac{s}{2} \right)^{r_1 + r_2} \Gamma\left(\frac{s + 1}{2} \right)^{r_2}  $$
where $r_1$ is the number of real embeddings of $K$ and $r_2$ the number of pairs of complex embeddings, so that $d = r_1 + 2r_2.$ 

Let $H(s) = \zeta_K(s) (s-1).$ Since the Dedekind zeta function has a simple pole at $s = 1,$ $H(s)$ is entire. The equation (\ref{imp2}) for $H(s)$ is 
$$-\R \frac{H'}{H}(s) =  \frac{1}{2}\log \frac{q}{\pi^d} + \frac{r_1 + r_2}{2}\R \frac{\Gamma'}{\Gamma}\left( \frac{s}{2} \right) + \frac{r_2}{2}\R \frac{\Gamma'}{\Gamma}\left( \frac{s + 1}{2} \right) + \R\frac{1}{s} -  G(s).$$
Similar to (\ref{halfEqn}), we integrate the inequality above from as $\sigma$ varies from $\hf$ to $\sigma_0$, use Lemma \ref{Lem:Ipart}, and obtain
\begin{equation} \label{eqn:half2}
\log|H(\hf)| - \log|H(\s)| \leq \frac{\log C}{4\log x}  + \frac{d}{4 \log^2 x} - \hf(\s - \hf)G(\s) + \frac{1}{\log x}.
\end{equation}
Now we need an upper bound for $\log|H(\s)|,$ which will be derived in the same way as the upper bound for $\log|L(f,\s)|.$ 
\begin{lemma} \label{lem:h}
Unconditionally, for any positive real number $t > \hf$ and any $x \geq 2$ we have
\begin{eqnarray*}
- \frac{H'}{H}(t) &\leq&  \sum_{n \leq x} \frac{d\Lambda(n)}{n^t} \frac{\log \tfrac{x}{n}}{\log x} + \frac{1}{\log x}\left( \frac{H'}{H}(t) \right)' + \frac{1}{\log x} \sum_{\rho \neq 0,1} \frac{x^{\rho - t}}{(\rho - t)^2} \\
&& + \frac{(r_1 + r_2)}{\log x} \sum_{n=0}^{\infty} \frac{x^{-2n-t}}{(2n + t)^2}  + \frac{r_2}{\log x}  \sum_{n=0}^{\infty} \frac{x^{-2n - 1 -t}}{(2n + 1 + t)^2}. 
\end{eqnarray*}
\end{lemma}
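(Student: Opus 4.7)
The plan is to mimic the proof of Lemma \ref{upsigma}, applied to the entire function $H(s) = \zeta_K(s)(s-1)$ via the decomposition
$$-\frac{H'}{H}(s) = -\frac{\zeta_K'}{\zeta_K}(s) - \frac{1}{s-1}.$$
For $\R s > 1$, the first summand is the Dirichlet series $\sum_n \Lambda_K(n) n^{-s}$ with nonnegative coefficients $\Lambda_K(n) \leq d\Lambda(n)$ (the standard bound, which follows from $\sum_{\pf \mid p} f(\pf) \leq d$).

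The core step is to apply the Perron-type contour integral
$$\frac{1}{2\pi i}\int_{c-i\infty}^{c+i\infty} -\frac{H'}{H}(t+w)\,\frac{x^w}{w^2}\,dw$$
with $c > \max(0, 1-t)$, and evaluate it two ways. Using the decomposition above and the standard Perron identity $\frac{1}{2\pi i}\int y^w/w^2\,dw = \log y$ for $y \geq 1$ and $0$ otherwise, the integral equals $\sum_{n \leq x}\Lambda_K(n) n^{-t}\log(x/n)$ from the Dirichlet series, plus the residue contribution $-\log x/(t-1) + (1-x^{1-t})/(t-1)^2$ from the $-1/(t+w-1)$ piece (evaluating its residues at $w=0$ and $w=1-t$). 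Shifting the contour far to the left, the same integral equals the sum of residues of $-H'/H(t+w)x^w/w^2$: at $w=0$ (giving $-\log x\,(H'/H)(t) - (H'/H)'(t)$), at each non-trivial zero $\rho$ of $\zeta_K$ (giving $-x^{\rho-t}/(\rho-t)^2$), and at the trivial zeros from $\Gamma(s/2)^{r_1+r_2}\Gamma((s+1)/2)^{r_2}$ with multiplicity at most $r_1+r_2$ at $w=-2n-t$ and multiplicity at most $r_2$ at $w=-2n-1-t$. Equating the two evaluations, isolating $-H'/H(t)$, and dividing by $\log x$ yields an exact identity for $-H'/H(t)$.

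The inequality in the lemma is then obtained by three replacements that each enlarge the right-hand side: replacing $\Lambda_K(n)$ by $d\Lambda(n)$; using the nominal multiplicities $r_1+r_2$ and $r_2$ in the trivial-zero sums (even though the multiplicity of $\zeta_K$ at $s=0$ is actually only $r_1+r_2-1$, forced by the simple pole of $\Lambda_K$); and discarding the residual term
$$-\frac{1}{t-1} + \frac{1-x^{1-t}}{(t-1)^2\log x}.$$
The only substantive new point, and the main obstacle relative to the argument of Lemma \ref{upsigma}, is verifying that this last expression is $\leq 0$ for every $t > \hf$ and $x \geq 2$. Setting $v = (t-1)\log x$, it equals $-(v+e^{-v}-1)/\bigl((t-1)^2\log x\bigr)$; the denominator is nonnegative, and the numerator is nonnegative by the elementary inequality $e^{-v} \geq 1-v$, so the expression is indeed nonpositive and may be dropped. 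Combined with the three comparisons above, this completes the proof.
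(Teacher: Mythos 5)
Your argument is correct and follows essentially the same route as the paper's proof: decompose $-H'/H = -\zeta_K'/\zeta_K - 1/(s-1)$, carry out the Perron contour shift as in Lemma~\ref{upsigma}, replace $(\zeta_K'/\zeta_K)'$ with $(H'/H)'$, and then discard the resulting nonpositive residual terms (including the overcounted trivial-zero contribution at $s=0$, which the paper records explicitly as $-x^{-t}/(t^2\log x)$), together with the bound on $\Lambda_K$. The one small value your write-up adds is the explicit verification, via $e^{-v}\geq 1-v$ with $v=(t-1)\log x$, that the leftover $-1/(t-1)+(1-x^{1-t})/((t-1)^2\log x)$ is nonpositive, which the paper merely asserts.
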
 
\begin{proof}
By definition of $H(s)$ and by using the same arguments as in Lemma \ref{upsigma}, we have
\begin{eqnarray*}
&&- \frac{H'}{H}(t) =  -\frac{\zeta_K'}{\zeta_K}(t) - \frac{1}{t - 1} \\
&\leq& \sum_{N \af \leq x} \frac{\Lambda_K(\af)}{(N\af)^t} \frac{\log\tfrac{x}{N\af}}{\log x}  + \frac{1}{\log x}\left( \frac{H'}{H}(t) \right)' + \frac{1}{(t-1)^2 \log x} + \frac{1}{\log x} \sum_{\rho \neq 0,1} \frac{x^{\rho - t}}{(\rho - t)^2} \\
&& + \frac{(r_1 + r_2)}{\log x} \sum_{n=0}^{\infty} \frac{x^{-2n-t}}{(2n + t)^2}   + \frac{r_2}{\log x} \sum_{n=0}^{\infty} \frac{x^{-2n - 1 -t}}{(2n + 1 + t)^2} - \frac{x^{-t}}{t^2\log x} - \frac{x^{1-t}}{(t-1)^2\log x} - \frac{1}{t - 1}.
\end{eqnarray*}
Furthermore 
$$\sum_{N \af \leq x} \frac{\Lambda_K(\af)}{(N\af)^t} \frac{\log\tfrac{x}{N\af}}{\log x} \leq  \sum_{n \leq x} \frac{d\Lambda(n)}{n^t} \frac{\log \tfrac{x}{n}}{\log x},$$ 
and $\frac{1}{(s-1)^2 \log x} - \frac{x^{1-s}}{(s-1)^2\log x} - \frac{1}{s - 1}$ and $- \frac{x^{-s}}{s^2\log x}$ are entire when $\R s \geq 0$ and are less than or equal to zero for any positive real number $t$ and $x \geq 2.$
Combining these facts and the inequalities above, we prove the lemma.
\end{proof}
We take $t = \sigma$ in Lemma \ref{lem:h} and integrate over $t$ from $\s$ to $\infty.$ By the same reasoning as in the proof of Theorem \ref{prop:upper},
\begin{eqnarray*}
\log|H(\hf)| &\leq& \sum_{n \leq x} \frac{d\Lambda(n)}{n^{\hf + \tfrac{1}{2\log x}}\log n} \frac{\log \tfrac{x}{n}}{\log x} + \frac{3\log C}{4\log x} + \frac{3d}{4\log^2 x} + \frac{4de^{-0.5}}{x^{1/2}\log^2 x} + \frac{2}{\log x + 1}  + \frac{1}{\log x}.
\end{eqnarray*}
From Corollary \ref{cor:prop} (let $x = \log^2 C$), we obtain 
$$ \log |\zeta_K(\hf)| \leq  \frac{23d}{25}\frac{\log C}{\log^2\log C} + \frac{3}{8} \frac{\log C}{\log \log C} + \frac{3}{2\log \log C} + \log 2.$$
The lemma follows because $\log \log C \geq 10.$
\end{proof}
\section{Application to Positive Definite Ternary Quadratic Forms}
We call $N$ {\it eligible} for a positive ternary quadratic form $f(x,y,z)$ if there are no congruence obstructions prohibiting $f$ from representing $N$. Let $\chi = \left(\frac{-40N}{\cdot}\right)$ be the Kronecker-Legendre symbol. Also define
$$ L(s,\chi) := \sum_{n=1}^{\infty} \frac{\chi(n)}{n^s}, \,\,\,\,\,\,\,\,\,
{\textrm{and}} \,\,\,\,\,\,\,\,\,
L(E(-10N),s) := \sum_{n=1}^{\infty} \frac{A(n)\chi(n)}{n^s}. 
$$
By Dirichlet's class number formula (see \cite{Da}) and special case of Waldspurger's theorem connecting the Fourier coefficients of half-integral weight cusp forms and $L(E(-10N),1)$ (Theorem 2 of \cite{OS}), Ono and Soundararajan showed that if $N$ is an eligible square-free integer coprime to 10 and is not represented by Ramanujan's form, then 
\begin{equation}\label{lbound}
\frac{L(E(-10N),1)}{L(1, \chi)^2} \geq \frac{2}{7} \left(\frac{q}{4\pi^2}\right)^{1/4},
\end{equation} 
where $q$ is a conductor of $E(-10N),$ and its value is $1600N^2$ (see Prop.2 in \cite{OS}). Ono and Soundarajan proved that (\ref{lbound}) failed when $N \geq 2 \cdot 10^{10}$. Note that for $N \leq 2 \cdot 10^{10}$, W.Galway verified by computer whether it is represented by the form. The difficulty of showing this is finding an upper bound for $L(E(-10N),1)$ because of the big contribution of zeros on the critical line. It involves long and complicated calculations. 
\\
\\
As seen in the proof of Theorem \ref{prop:upper}, the technique from \cite{Moment} allows us to ignore the contribution of nontrivial zeros. Therefore by applying upper bound in Theorem \ref{prop:upper}, not only is our calcuation much simpler but also we get a better lower bound for $N$, i.e. it requires $N \geq 3 \cdot 10^7$ to yield contradiction for (\ref{lbound}). 
\\
\\
With Ono and Soundararajan's methods of deriving (\ref{lbound}) and the upper bound in Theorem \ref{prop:upper}, under GRH, we may be able to obtain not too large positive integer $N$ such that if $m \geq N$ and is represented by the spinor genus of a positive definite ternary form, then it is represented by the form itself. For example, in this paper, we will apply those techniques to Kaplansky's form and show that if $N \geq 2 \cdot 10^8$ is a squarefree integer, then it is represented by Kaplansky's forms: $\varphi_1(x,y,z) = x^2 + y^2 + 7z^2$ and $\varphi_2(x,y,z) = x^2 + 2y^2 + 2yz + 4z^2$. This result will have a simpler proof and a better lower bound than the one that T.Reinke \cite{Re} derived. 
\\
\\
T. Reinke \cite{Re} applied the method in \cite{OS} to prove the analog of (\ref{lbound}). If $N $ is an eligible square-free integer and is not represented by $\varphi_j$, where $j = 1, 2,$ then 
\begin{equation} \label{lboundkap}
\frac{L(E(-7N),1))}{L(1,\chi_d)^2} \geq  
\left\{ \begin{array}{l}
\frac{34}{101}\frac{q_N}{4\pi^2} \,\,\,\,\,\,\,\,\,\,\,\, {\rm if} \,\,\, (N,7) = 1 \\
\frac{41}{101}\frac{q_N}{4\pi^2} \,\,\,\,\,\,\,\,\,\,\,\, {\rm if} \,\,\, (N,7) > 1,
\end{array} \right.
\end{equation}
where $q_N = 28^2N^2$ for $(N,7) = 1,$ and $q_N = 7 \cdot \left( \frac{4}{7} \right)^2 N^2$ for $(N,7) > 1.$ The inequality (\ref{lboundkap}) fails when $N \geq 10^{12}.$
\\
\\
Since the proof for both Ramanujan's form and Kaplansky's forms will be the same, for simplicity of notation let $L_E(s)$ be either $L(E(-10N),s)$ or $L(E(-7N),s),$ and $L(s,\chi)$ be either $L(s, \chi_{-40N})$ or $L(s, \chi_d),$ where $d = -28N$ when $(N,7) = 1$ and $d = -\frac{4N}{7}$ when $(N,7) > 1.$ We will get upper bounds for $L_E(1)$ and lower bounds for $L(1, \chi).$
\\
\\
Let $L_n(s) = L_E(s + \hf).$ From functional equation of $L_E(s),$ we have
$\Lambda(s) = \Lambda(1-s),$ where
$$\Lambda(s) = q^{\tfrac{s}{2}} \pi^{-s} \Gamma\left(\tfrac{s + \hf}{2}\right)\Gamma\left(\tfrac{s + \tfrac{3}{2}}{2}\right)L_n(s). $$
Since $L_E(s)$ is $L$-function of degree 2 and 
$$ -\frac{L'_E}{L_E}(s) = \sum_{n=1}^{\infty} \frac{\lambda(n)\chi(n)}{n^s},$$
by Theorem \ref{prop:upper} choosing $\lambda = 0.5$ we obtain
\begin{eqnarray} \label{upLaone} 
\log|L_E(1)| &=& \log|L_n(\hf)| \\ 
&\leq& \R \sum_{n \leq x} \frac{\lambda(n)\chi(n)}{n^{1 + \frac{0.5}{\log x}}\log n}\frac{\log \left(\frac{x}{n} \right)}{\log x} + \frac{3}{4\log x} \log \frac{q}{4\pi^2} +\frac{3\log 2}{4\log x} \nonumber \\ 
&& + \frac{3}{2\log^2 x} + \frac{8e^{-0.5}}{x^{1/2}\log^2 x}. \nonumber
\end{eqnarray} 
For a lower bound for $\log|L(1, \chi)|$, we prove the following proposition.
\begin{prop}\label{lowLone} Assume GRH. Let $\chi$ be a primitive real character mod q, $y \geq 2$, $a(y) = \frac{1}{\log y} + \frac{2}{\sqrt{y} \log^2 y},$ and $b(y) = \frac{1}{1-\frac{2}{\sqrt{y}\log y} - \frac{2}{\log y}}. $ We have
\begin{eqnarray*}
&&\log|L(1, \chi)| \\ 
&\geq&  \sum_{n \leq y} \frac{\Lambda(n)\chi(n)}{n\log n} \frac{\log \left(\frac{y}{n} \right)}{\log y}  + \frac{1}{4\log y} \log \frac{q}{4\pi^2} + \frac{\log 4}{4\log y} -\frac{\gamma}{2\log y} \\ 
&& + \,\, a(y)b(y) \left( \sum_{n \leq y} \frac{\Lambda(n)\chi(n)}{n} \frac{\log \left(\frac{y}{n} \right)}{\log y} - \frac{1}{4}\log \frac{q}{4\pi^2}  -  \frac{\pi^2}{24\log y} -  \frac{\log 4}{4} + \frac{\gamma}{2}  \right).
\end{eqnarray*}
\end{prop}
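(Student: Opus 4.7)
I mirror the architecture of the proof of Theorem \ref{prop:upper}, but integrate outward from $\sigma=1$ rather than from $\sigma=\hf$, and aim at a lower bound. Write $S_1(y)=\sum_{n\le y}\tfrac{\Lambda(n)\chi(n)}{n\log n}\tfrac{\log(y/n)}{\log y}$ and $S_2(y)=\sum_{n\le y}\tfrac{\Lambda(n)\chi(n)}{n}\tfrac{\log(y/n)}{\log y}$ for the two prime-power sums in the statement; let $\gamma_0$ denote Euler's constant (to avoid clashing with the ordinate $\gamma$ of a zero), and take $k\in\{0,1\}$ to be the parity of $\chi$ (equal to $1$ in the applications).

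Step 1 (integrated identity). Apply Lemma \ref{upsigma} to $L(s,\chi)$ at $s=\sigma$ with $x=y$, and integrate the identity over $\sigma\in[1,\infty)$. Since $\log L(\sigma,\chi)\to 0$ and $L'/L(\sigma,\chi)\to 0$ as $\sigma\to\infty$, the left side becomes $\log L(1,\chi)$; the Dirichlet-series term produces $S_1(y)$; the term $\tfrac{1}{\log y}(L'/L)'$ telescopes to $-L'/L(1,\chi)/\log y$; and the zero and gamma sums become tails $E_\rho=\tfrac{1}{\log y}\sum_\rho\int_1^\infty y^{\rho-\sigma}/(\rho-\sigma)^2\,d\sigma$ and $E_\gamma$. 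Under GRH, $|\rho-\sigma|^2\ge\tfrac14+\gamma^2$ for $\sigma\ge 1$, so $|E_\rho|\le 2G(1)/(\sqrt y\log^2 y)$ and $E_\gamma=O(y^{-k-1}/\log^2 y)$. Substituting (\ref{imp2}) at $s=1$, namely $-L'/L(1,\chi)=\tfrac12\log(q/\pi)+\tfrac12\Gamma'/\Gamma((1+k)/2)-G(1)$ with $\Gamma'/\Gamma(1)=-\gamma_0$ for odd $\chi$, and collecting the $G(1)$-coefficient into $a(y)$ gives
\[
\log|L(1,\chi)|\ \ge\ S_1(y)+\frac{\text{(conductor-and-digamma constants)}}{\log y}-a(y)\,G(1).
\]
(The $q$ here is the degree-two conductor used in (\ref{upLaone}), which in the applications equals the square of the Dirichlet conductor of $\chi$, accounting for the normalisation $q/(4\pi^2)$ in the statement.)

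Step 2 (self-consistent bound for $G(1)$, and conclusion). Applying Lemma \ref{upsigma} directly at $s=1$ produces
\[
-L'/L(1,\chi)=S_2(y)+\tfrac{1}{\log y}(L'/L)'(1,\chi)+\tfrac{1}{\log y}\sum_\rho\tfrac{y^{\rho-1}}{(\rho-1)^2}+\text{tiny},
\]
the $\rho$-sum being bounded under GRH by $2G(1)/\sqrt y$. Differentiating Proposition \ref{HadamardL} yields $(L'/L)'(1,\chi)=-\tfrac14\psi'((1+k)/2)-\sum_\rho(1-\rho)^{-2}$, whose modulus is at most $2G(1)+\tfrac14\psi'((1+k)/2)$; for $k=1$, $\tfrac14\psi'(1)=\pi^2/24$, which is exactly the origin of the $\pi^2/(24\log y)$ term. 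Equating with $M-G(1)$ from (\ref{imp2}) and isolating $G(1)$ produces $G(1)(1-\tfrac{2}{\log y}-\tfrac{2}{\sqrt y\log y})\le M-S_2(y)+\tfrac{\pi^2/24}{\log y}+\text{tiny}$, i.e.\ $G(1)\le b(y)[M-S_2(y)+\text{correction}]$. Plugging this into the Step 1 inequality converts $-a(y)G(1)$ into $+a(y)b(y)[S_2(y)-M-\text{correction}]$, and distributing the conductor/digamma constants appropriately recovers Parts A and B of the statement.

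\textbf{Main obstacle.} The heart of the argument is the self-referential bound for $G(1)$: both the truncated zero sum $\sum_\rho y^{\rho-1}/(\rho-1)^2$ and the derivative sum $\sum_\rho(1-\rho)^{-2}$ are themselves controlled by $G(1)$ under GRH, so $G(1)$ appears on both sides of the estimate; the factor $b(y)$ is precisely the result of solving for it, and its positivity requires $1-\tfrac{2}{\log y}-\tfrac{2}{\sqrt y\log y}>0$, which tacitly strengthens the stated hypothesis $y\ge 2$. The remaining technical bookkeeping is to evaluate $\Gamma'/\Gamma$ and $\psi'$ at $(1+k)/2$ so that $\gamma_0$, $\log 4$, and $\pi^2/24$ land in their displayed coefficients.
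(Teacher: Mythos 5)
Your proposal matches the paper's proof step for step: integrate Lemma \ref{upsigma} (with the non-negative archimedean tail dropped, giving the inequality (\ref{fnL2})) outward from $\sigma=1$ to reach the analogue of (\ref{logL1}), then derive a self-referential bound for $F(1)$ (your $G(1)$) by combining the explicit formula (\ref{fnL}), its $s$-derivative (\ref{bder}), and the truncated identity at $s=1$, solving the resulting linear inequality to produce the factor $b(y)$. Your side observations are all correct and flag genuine imprecisions in the paper's statement: positivity of $b(y)$ requires $y$ substantially larger than the stated $y\ge 2$ (roughly $y\gtrsim 14$), the constants $-\gamma/2$ and $\pi^2/24$ presuppose $\chi$ odd (gamma factor $\Gamma\!\left(\tfrac{s+1}{2}\right)$), and the $q$ appearing in $\log\tfrac{q}{4\pi^2}$ is in fact the degree-two conductor of $L_E$, i.e.\ the square of the Dirichlet modulus of $\chi$, rather than the modulus itself as the wording ``mod $q$'' suggests.
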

\begin{proof} Let $\rho = \frac{1}{2} + i\gamma$ run over the non-trivial zero of $L(s, \chi).$ Define 
$$F(s) = \R \sum_\rho \frac{1}{s-\rho} = \sum_\rho \frac{\sigma - 1/2}{(\sigma-1)^2 + (t-\gamma)^2}. $$
From (17) and (18) of Davenport [Da], Chapter 12, if $L(s,\chi) \neq 0,$
\begin{equation}\label{fnL}
-\R \frac{L'}{L}(s, \chi) = \frac{1}{4} \log \frac{q}{\pi^2} + \frac{1}{2}\R \frac{\Gamma'}{\Gamma}\left(\frac{s+1}{2}\right) - F(s).
\end{equation}  
\\
For $s \geq 1$, the contribution from the trivial zeros $\sum_{k=0}^{\infty} \frac{y^{-2k -1 -s}}{(2k + 1 +s)^2} \geq 0$, and so by the same arguments as in Lemma (\ref{upsigma}), we have
\begin{equation}\label{fnL2}
- \R \frac{L'}{L}(s, \chi) \geq \sum_{n \leq y} \frac{\Lambda(n)\chi(n)}{n^s} \frac{\log \left(\frac{y}{n} \right)}{\log y} + \frac{1}{\log y}\left( \frac{L'}{L}(s,\chi) \right)' + \frac{1}{\log y} \R \sum_\rho \frac{y^{\rho - s}}{(\rho - s)^2}.
\end{equation}
Integrating (\ref{fnL}) as $\sigma$ varies from 1 to infinity, using (\ref{fnL}) and observing that
$$ \R \sum_\rho  \int_{1}^{\infty} \frac{y^{\rho - \sigma}}{(\rho - \sigma)^2} \>d \sigma  \geq -\sum_\rho \int_{1}^{\infty} \frac{y^{1/2-\sigma}}{|\rho - 1|^2} \>d\sigma = -\sum_\rho \frac{y^{-1/2}}{|\rho - 1|^2 \log y} = -\frac{2y^{-1/2}F(1)}{ \log y}, $$
we obtain
\begin{equation}\label{logL1}
\log|L(1,\chi)| \geq  \sum_{n \leq y} \frac{\Lambda(n)\chi(n)}{n\log n} \frac{\log \left(\frac{y}{n} \right)}{\log y}  + \frac{1}{4\log y} \log \frac{q}{4\pi^2} + \frac{\log 4}{4\log y} -\frac{\gamma}{2\log y}  - a(y)F(1),
\end{equation}
where  $ \frac{\Gamma'}{\Gamma} (1) = -\gamma.$
\\
\\
To prove the proposition, we need to find the lower bound for $-F(1)$. First note that taking derivative of (\ref{fnL}) at $s = 1$, we get
\begin{equation}\label{bder}
\R\left(\frac{L'}{L}(1,\chi)\right)' = -\frac{1}{4}\left(\frac{\Gamma'}{\Gamma}\right)'(1) - \R\sum_{\rho} \frac{1}{(1-\rho)^2} \geq - \frac{\pi^2}{24} - 2F(1).
\end{equation}
From (\ref{fnL}),(\ref{fnL2}) at $s=1$ and (\ref{bder}), 
\begin{eqnarray*}
-F(1)
&\geq&    \sum_{n \leq y} \frac{\Lambda(n)\chi(n)}{n} \frac{\log \left(\frac{y}{n} \right)}{\log y} - \frac{\pi^2}{24\log y}-\frac{2F(1)}{\log y} + \R \frac{1}{\log y} \sum_\rho \frac{y^{\rho - 1}}{(\rho - 1)^2} - \frac{1}{4} \log \frac{q}{\pi^2} + \frac{\gamma}{2} \\
&\geq&  \sum_{n \leq y} \frac{\Lambda(n)\chi(n)}{n} \frac{\log \left(\frac{y}{n} \right)}{\log y} - \frac{\pi^2}{24\log y}-\frac{2F(1)}{\log y} - \frac{2F(1)}{\sqrt{y}\log y}  - \frac{1}{4} \log \frac{q}{4\pi^2} - \frac{\log 4}{4} + \frac{\gamma}{2}. 
\end{eqnarray*}
Therefore,
$$ -F(1) \geq b(y) \left(\sum_{n \leq y} \frac{\Lambda(n)\chi(n)}{n} \frac{\log \left(\frac{y}{n} \right)}{\log y} - \frac{\pi^2}{24\log y} - \frac{1}{4} \log \frac{q}{4\pi^2} - \frac{\log 4}{4} + \frac{\gamma}{2}  \right). $$
Putting the inequality above into (\ref{logL1}), we prove the proposition.
\end{proof}
To get an upper bound for $ \log\frac{|L_E(1)|}{|L(1,\chi)|^2}$, we choose $x = 600$ in (\ref{upLaone}) and choose $y = 2100 $ in Proposition \ref{lowLone}.   Thus
\begin{eqnarray*}
\log\frac{|L_E(1)|}{|L(1)|^2}
&\leq& 0.147695 + 0.14158\log \frac{q}{4\pi^2}+  \sum_{\substack{n = p^k \\ p \leq 600 }} \big(\frac{\lambda(n)}{n^{0.5/\log{x}}\log n}\frac{\log \left(\frac{x}{n} \right)}{\log x} - \frac{2\Lambda(n)}{\log n}\frac{\log \left(\frac{y}{n} \right)}{\log y} \\
&& - 2a(y)b(y) \Lambda(n)\frac{\log \left(\frac{y}{n} \right)}{\log y} \big)\frac{\chi(n)}{n} +  \sum_{n = p = 601}^{2100} \left|\frac{2}{\log n} + 2a(y)b(y)\right|\frac{\Lambda(n)}{n}\frac{\log \left(\frac{y}{n} \right)}{\log y}.
\end{eqnarray*}
To calculate the first sum, we use the exact value of $\lambda(n)$ of $L_E(s)$ (see \cite{OS} section 2 for Ramanujan's form and \cite{Re} section 3.2 for Kaplansky's form). Using computer, for Ramanujan's form, we get contradiction for (\ref{lbound}) when $N \geq 3 \cdot 10^7.$ Similarly, for Kaplansky's forms, (\ref{lboundkap}) fails when $N \geq 2\cdot 10^8.$

\section{Appendix: Proof of Lemma \ref{Lem:Ipart}}
\begin{proof} From \cite{A} p.202, we have
$$\R  \frac{\Gamma'}{\Gamma} (z) = \log |z| - \R \frac{1}{2z} + I(z), $$
where 
$$I(z) =  - \R\int_0^{\infty} \frac{2\eta}{\eta^2 + z^2}\cdot \frac{d\eta}{e^{2\pi\eta} - 1} = - \int_0^{\infty} \frac{2\eta(\eta^2 + x^2 - y^2)}{(\eta^2 + x^2 - y^2)^2 + 4x^2y^2} \frac{d\eta}{e^{2\pi\eta}- 1}. $$ 
If $y^2 - x^2 \leq 0,$ it is clear that $I(z) \leq 0,$ and $\R \frac{1}{2z} \geq 0;$ hence the lemma is proved. Now we assume $y^2 - x^2 > 0.$ There are two possiblities for this case. \\
\\
{\bf Case 1:} $|z| \geq 4.$ 
\\
To prove the lemma, first we will show that
\begin{equation} \label{eqn:ipart}
I(z) \leq \frac{15}{128|z|^2} + \frac{4e^{-2\pi |z|/3}}{3}\left(  1 + \frac{1}{|z|^2} \right).
\end{equation}
Integrating $I(z)$ by part we obtain
\begin{eqnarray*}
- \R\int_0^{\infty} \frac{2\eta}{\eta^2 + z^2}\cdot \frac{d\eta}{e^{2\pi\eta} - 1} &=& \frac{1}{\pi} \R \int_0^{\infty} \frac{z^2 - \eta^2}{(\eta^2 + z^2)^2} \log (1-e^{-2\pi\eta}) \> d\eta \\
&\leq&  \frac{1}{\pi} \int_0^{|z|/3} + \int_{|z|/3}^{\infty} \frac{|z^2 - \eta^2|}{|\eta^2 + z^2|^2} \log \left(\frac{1}{1-e^{-2\pi\eta}}\right) \> d\eta. 
\end{eqnarray*}
In the first integral $|z^2 - \eta^2| \leq \tfrac{10}{9}|z|^2,$ $|\eta^2 + z^2| \geq \tfrac{8}{9}|z|^2,$ and 
$\int_{|a|}^{\infty}\log \left(\frac{1}{1-e^{-2\pi\eta}}\right) \> d\eta \leq \frac{\pi}{12e^{2\pi|a|}}.$ Hence
$$ \frac{1}{\pi}\int_0^{|z|/3} \frac{|z^2 - \eta^2|}{|\eta^2 + z^2|^2} \log \left(\frac{1}{1-e^{-2\pi\eta}}\right) \> d\eta \leq \frac{15}{128|z|^2}.$$
In the second integral $\frac{|z^2 - \eta^2|}{|\eta^2 + z^2|^2} \leq \frac{1}{|\eta^2 + z^2|^2} + \frac{|z|^2}{|\eta^2 + z^2|^2}.$ Also $|\eta^2 + z^2| = |z - i\eta|\cdot |z + i\eta| \geq \tfrac{|z|}{4}.$ Therefore 
$$\frac{1}{\pi}\int_{|z|/3}^{\infty} \frac{|z^2 - \eta^2|}{|\eta^2 + z^2|^2} \log \left(\frac{1}{1-e^{-2\pi\eta}}\right) \> d\eta \leq \frac{4e^{-2\pi |z|/3}}{3}\left(  1 + \frac{1}{|z|^2} \right). $$
Combining the inequalities above, we proved (\ref{eqn:ipart}). Since $x \geq 1/4$ and $|z| \geq 4,$ by (\ref{eqn:ipart}), 
$$-\R \frac{1}{2z} + I(z) \leq -\frac{1}{8|z|^2} + \frac{15}{128|z|^2} + \frac{4e^{-2\pi |z|/3}}{3}\left(  1 + \frac{1}{|z|^2} \right) \leq 0.$$
\\
{\bf Case 2:} $|z| < 4$. \\
From the last expression of (\ref{eqn:ipart}), the integrand of $I(z)$ is greater than or equal to zero when $\eta \leq \sqrt{y^2 - x^2}.$ Hence
$$-\R \frac{1}{2z} + I(z) \leq -\frac{x}{2(x^2 + y^2)} - \int_0^{\sqrt{y^2 - x^2}} \frac{2\eta(\eta^2 + x^2 - y^2)}{(\eta^2 + x^2 - y^2)^2 + 4x^2y^2} \frac{d\eta}{e^{2\pi\eta}- 1}. $$
Let $f(x,y)$ be the right hand side of the inequality above. Once we show that $f(x,y) \leq 0,$ the lemma will be proved. Without loss of generality, we can assume that $y \geq 0.$ For any fixed $y > x,$ 
$$\frac{\partial}{\partial x} f(x,y) = \frac{2x^2 - 2y^2}{4(x^2 + y^2)^2} + \int_0^{\sqrt{y^2 - x^2}} \frac{2\eta}{e^{2\pi\eta}- 1}\frac{2x(\eta^2 + x^2 - y^2)^2 + 8xy^2(\eta^2 -y^2)}{((\eta^2 + x^2 - y^2)^2 + 4x^2y^2)^2} \> d\eta. $$
Since $y^2 - x^2 > 0,$ the first term is less than 0. Also $2x(\eta^2 + x^2 - y^2)^2 + 8xy^2(\eta^2 -y^2) < 0$ because $2y\sqrt{y^2 - \eta^2} > y^2 - \eta^2 \geq y^2 - x^2 -\eta^2 .$ Therefore $f(x,y)$ is decreasing with respect to $x$. Because $x \geq 1/4$, then for any fixed $y > x,$ $f(x,y) \leq f(1/4, y).$ We know that $|z| < 4.$ From numerical computation, $f(1/4,y) \leq 0$ for $1/4 \leq y \leq 4.$ 
\end{proof}

\end{document}